\DeclareMathOperator{\arginf}{arg\,inf}
\newtheorem{theorem}{Theorem}
\newtheorem{assumption}{Assumption}
\newtheorem{remark}{Remark}
\newtheorem{corollary}{Corollary}
\newtheorem{problem}{Problem}
\newtheorem{lemma}{Lemma}
\newtheorem{definition}{Definition}
\newtheorem{proposition}{Proposition}
\title{Dynamic Programming for POMDP with Jointly Discrete and Continuous State-Spaces}
\author{Donghwan Lee, Niao He, and Jianghai Hu
\thanks{This material is based upon work supported by the National Science Foundation under Grant No. 1539527}
\thanks{D. Lee and N. He are with Coordinated Science Laboratory (CSL),
University of Illinois at Urbana-Champaign, IL 61801, USA {\tt\small
donghwan@illinois.edu}, {\tt\small niaohe@illinois.edu}.}
\thanks{J. Hu is with the Department of Electrical and Computer Engineering,
Purdue University, West Lafayette, IN 47906, USA {\tt\small
jianghai@purdue.edu}.} }
\begin{document}
\maketitle

\begin{abstract}
In this work, we study dynamic programming (DP)
algorithms for partially observable Markov decision processes with
jointly continuous and discrete state-spaces. We consider a class of stochastic systems which have coupled discrete and continuous systems, where only the continuous state
is observable. Such a family of systems includes many real-world systems, for example, Markovian jump linear systems and physical systems interacting with humans. A finite history of observations is used as a
new information state, and the convergence of the corresponding DP
algorithms is proved. In particular, we prove that the DP iterations
converge to a certain bounded set around an optimal solution. Although deterministic DP algorithms are studied in this paper, it is expected that this fundamental work lays foundations for advanced studies on reinforcement learning algorithms under the same family of systems.
\end{abstract}

\section{Introduction}

The goal of this paper is to study optimal control problems for
partially observable Markov decision processes (POMDPs) with
jointly continuous and discrete state-spaces. Optimal control
designs for stochastic systems have been a fundamental research
field for a long time~\cite{bertsekas1995dynamic}. Classical and
popular approaches include, for example, the linear quadratic
Gaussian control and stochastic model predictive control, where the stochastic model predictive control computes a suboptimal control policy with predictions of finite future trajectories. In this paper, we focus on stochastic systems with a special structure where the continuous state-space and discrete
state-space coexist and interact with each other. The state in the
discrete state-space evolves according to a Markov chain which
depends on the state of the control system with the continuous state-space. The overall system can
be viewed as a Markov decision process (MDP)~\cite{bertsekas1995dynamic} with coupled continuous and discrete
state spaces. Such classes of systems arise in several stochastic control
applications including
\begin{itemize}
\item vehicle path-planning~\cite{blackmore2007robust,blackmore2010probabilistic};

\item Markovian jump linear systems with examples of macroeconomic model~\cite{patrinos2014stochastic} and economic models of government expenditure~\cite{costa1999constrained};

\item vehicle controls with driver's behavior
models~\cite{bichi2010stochastic,di2014stochastic};

\item building climate control with human interactions~\cite{page2008generalised,dobbs2014model,donghwan2018simulation,donghwan2018approximate};

\item hybrid electric vehicle powertrain management~\cite{kolmanovsky2008discrete,johannesson2007assessing};
\end{itemize}
to name just a few.

The main contribution of this paper is the first
formal study of dynamic programming formulation and its
convergence results for the systems mentioned above with an additional assumption of the partial state observability, i.e., the discrete state is unobservable. An example includes control systems with human interactions, where unobservable human cognition and behaviors are modelled as discrete Markov chains or Markov decision processes. In particular, this class of systems arises in building management problems with human interactions~\cite{donghwan2018approximate}. To improve the control performance, a finite observation history is considered for the output-feedback control policy with its performance analysis. The use of finite observation histories for POMDPs is a common practice in the reinforcement learning literature~\cite{mnih2015human,hausknecht2015deep}. However, to our knowledge, there exists no attempt to analyze its sub-optimality so far. The proposed results build upon the previous work~\cite{donghwan2018approximate},~\cite{kolmanovsky2008discrete}. Although deterministic DP algorithms are studied in this paper, this fundamental work lays foundations for advanced studies on reinforcement learning algorithms under the same family of systems.

\subsection{Related Work}
As long as a sufficient number of random samples of states can be obtained, the scenario-based (or sample-based) approximation approach, e.g.,~\cite{prandini2000probabilistic,blackmore2007robust,blackmore2010probabilistic,calafiore2013stochastic,schildbach2014scenario,calafiore2013robust}, can design a control policy for complex stochastic systems with generic
probability distributions of uncertainties. It was successfully applied to
robot path-planning problems in~\cite{blackmore2007robust,blackmore2010probabilistic} and
aircraft conflict detection in~\cite{prandini2000probabilistic}.
In the scenario-based approach, the process related to uncertainties and the evolution of the control system trajectories are not fully coupled. For instance, in~\cite{blackmore2007robust,blackmore2010probabilistic}, the uncertainties of the continuous system are affected by the discrete system's state, while the latter does not depend on the continuous system's state.

Fully coupled systems similar to those in this paper
were studied in~\cite{kolmanovsky2008discrete,johannesson2007assessing} for
hybrid electric vehicle powertrain management problems. They are fully coupled in the sense that both continuous and discrete system states affect each other. They
considered approximate dynamic programming~\cite{bertsekas1996neuro} (or reinforcement learning~\cite{sutton1998reinforcement} from the machine learning
context). Compared to~\cite{kolmanovsky2008discrete}, the control
systems in this paper have additional continuous stochastic
disturbances, while~\cite{kolmanovsky2008discrete} only addresses
discrete stochastic disturbances that depend on the discrete state
of the Markov chain. Reinforcement learning algorithms were adopted in~\cite{donghwan2018simulation,donghwan2018approximate} for
building management systems with occupant interactions. Compared to~\cite{donghwan2018approximate}, we consider more general Markov chain models for the discrete state-space
evolution.

\section{Preliminaries and Problem Formulation}\label{section:preliminaries and problem formulation}

\subsection{Noataion}
Throughout the paper, the following notations will be used:
${\mathbb N}$ and ${\mathbb N}_+$: sets of nonnegative and
positive integers, respectively; ${\mathbb R}$: set of real
numbers; ${\mathbb R}^n $: $n$-dimensional Euclidean space;
${\mathbb R}^{n \times m}$: set of all $n \times m$ real matrices;
$A^T$: transpose of matrix~$A$; ${\mathbb S} ^n $ (resp. ${\mathbb
S}_+^n$, ${\mathbb S}_{++}^n$): set of symmetric (resp. positive
semi-definite, positive definite) $n\times n$ matrices; $|S|$:
cardinality of a finite set $S$; ${\mathbb E}[\cdot]$: expectation
operator; ${\mathbb P}[\cdot]$: probability of an event; ${\rm
diam}(C)$: diameter of a set $C$ in ${\mathbb R}^n$, i.e., ${\rm
diam}(C):=\sup\{\| s-s'\|_2 :s,s' \in C\}$; for any vector $x$,
$[x]_i$ is its $i$-th element; for any matrix $P$, $[P]_{ij}$
indicates its element in $i$-th row and $j$-th column; matrix $P
\in {\mathbb R}^{n \times n}$ is called a (row) stochastic matrix
if its row sums are one; $x \in {\mathbb R}^n$ is called a
stochastic vector if its column sum is one; if ${\bf z}$ is a
discrete random variable which has $n$ values and $\mu \in
{\mathbb R}^n$ is a stochastic vector, then ${\bf z} \sim \mu$
stands for ${\mathbb P}[{\bf z} = i] = [\mu]_i$ for all $i \in
\{1,\ldots,n \}$; if ${\bf z}$ is a continuous random variable
with the density function $\rho(\cdot)$, we denote ${\bf z} \sim
\rho(\cdot)$; $\Delta_k$ with $k \in {\mathbb N}_+$: unit simplex
defined as $\Delta_k:=\{ (\alpha_1,\ldots,\alpha_k
):\alpha_1+\cdots+\alpha_k =1,\alpha_i\ge 0,\forall i\in \{
1,\ldots,k\} \}$; w.r.t: abbreviation for ``with respect to.''
Throughout the paper, random variables will be highlighted by
boldface fonts, while the corresponding realizations will be
written by plain fonts.

\subsection{Markov Decision Process}

In this paper, we consider a discrete-time Markov decision process
(MDP)~\cite{bertsekas1996neuro} defined as a tuple $\langle
X,S,U,p_{\bf x},p_{\bf s},p_{\bf r},{\bf r},\gamma\rangle$, where $X$ is a continuous compact state-space,
$S$ is a discrete finite state-space, $U$ is a continuous or
discrete action-space, $p_{\bf x}(x'|x,s,u)$ defines a
continuous state transition probability density function from the
current state $x\in X$ to the next state $x'\in X$ under the action
$u\in U$, and $s\in S$, $p_{\bf s}(s'|s,x)$ defines the discrete
state transition probability mass function from the current state
$s\in S$ to the next state $s'\in S$ under the current
continuous state $x$, ${\bf r}:X \times S \times U \to {\mathbb
R}$ is a stochastic reward function with its expectation
${\mathbb E}[{\bf r}(x,s,u)]= R(x,s,u)$ and density function
$p_{\bf r}({\bf r}|x,s,u)$ given $(x,s,u)$, and $\gamma\in [0,1)$
is called the discount factor. We assume that the expected reward is
bounded. For simplicity, we only consider the reward function
${\bf r}:X\times U \to {\mathbb R}$ which depends on the
continuous state $x$ and action $u$.
\begin{assumption}\label{assumption0}
The expected reward $R$ satisfies $R(x,s)\in [0,M]$ for all $(x,s)
\in X \times U$, where $M \in {\mathbb R}_{++}$.
\end{assumption}

The overall system can be expressed as
\begin{align}
&\begin{cases}
 {\bf x}(k+1)\sim p_{\bf x}(\cdot |{\bf x}(k),{\bf s}(k),{\bf u}(k)),\quad {\bf x}(0)\sim \rho_{\bf x}(\cdot) \\
 {\bf s}(k+1) \sim p_{\bf s} (\cdot|{\bf s}(k),{\bf x}(k)),\quad {\bf s}(0)\sim\rho_{\bf s}(\cdot)\\
 \end{cases}\label{eq:stochastic-system}
\end{align}
where $k\in {\mathbb N}$ is the time step, ${\bf x}(k)\in X$ is
the continuous state at time $k$, ${\bf u}(k)\in U$ is the
control input, and ${\bf s}(k)\in S$ is the discrete state,
$\rho_{\bf x}$ is the initial distribution of ${\bf x}(0)$, and
$\rho_{\bf s}$ is the initial distribution of ${\bf s}(0)$. In
this paper, the discrete state transition $p_{\bf s}(s'|s,x)$ is
represented by a Markov chain with and the state transition matrix
$P(x)$ parameterized by $x \in X$. A visual description of the
system is given in~\cref{fig:stochastic-system}. 
\begin{figure}[h!]
\centering\epsfig{figure=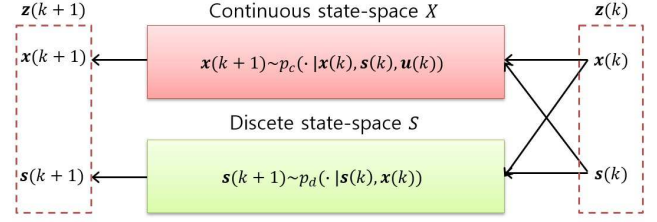,width=8.5cm}
\caption{Overall MDP structure.}\label{fig:stochastic-system}
\end{figure}
If we define the augmented state ${\bf z}(k):=\begin{bmatrix} {\bf x}(k)^T & {\bf s}(k)^T \\ \end{bmatrix}^T$, then~\eqref{eq:stochastic-system} can be formulated by the single Markov decision
process (MDP) with the transition probability density $p_{\bf z}(z'|z,u)$
\begin{align}
&{\bf z}(k+1)\sim p_{\bf z}(\cdot|{\bf z}(k),{\bf u}(k)),\quad {\bf z}(0)\sim \rho_{\bf z}(\cdot)\label{eq:stochastic-system2}
\end{align}
where the continuous and discrete state-spaces coexist and interact with each other.

\subsection{Information State}
\begin{figure}[h!]
\centering\epsfig{figure=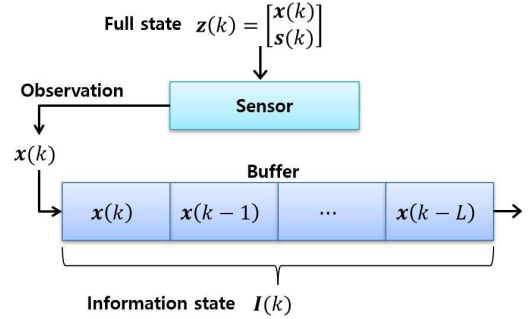,width=7cm}
\caption{An example of information state}\label{fig:information-state}
\end{figure}
In real applications, the full-state information is usually not
available. In this paper, we adopt the following
assumption.
\begin{assumption}\label{assumption1}
${\bf x}(k)$ is measured in real time, while ${\bf s}(k)$ cannot
be measured.
\end{assumption}

When the state is partially observable, the observation ${\bf
x}(k)$ loses the Markov property~\cite[pp.~63]{resnick2013adventures} in general, i.e., its transition usually
depends on all past history of ${\bf x}(k)$ and the current
action ${\bf u}(k)$. We formally define this property.
\begin{tcolorbox}[colframe=black,,colback=white,size=small]
\begin{definition}[Markov property]\label{def:MDP-property}
The process $({\bf I}(k),{\bf u}(k))_{k=1}^\infty$ is said
to satisfy the Markov property if
\begin{align*}
&{\bf I}(k+1) \sim {\mathbb P}[{\bf I}(k+1)=\cdot
|{\bf I}(k) = I(k),{\bf u}(k) = u(k)]\\
&={\mathbb P}[{\bf I}(k+1)= \cdot |{\bf I}(i)=I(i),i=0,\ldots,k,{\bf u}(k) = u(k)].
\end{align*}
\end{definition}
\end{tcolorbox}

Define the space of information ${\cal I}$ and let ${\bf
I}(k)\in {\cal I}$ be an available information at time $k$,
called the information state. In other words, ${\bf I}(k)$ is a
general form of the observation and can include artificially
crafted information from the pure observation ${\bf x}(k)$. For
example, the information is the current continuous state ${\bf x}(k)$,
then the information space is $X$. Another example of the information state is the finite memory information state.
\begin{tcolorbox}[colframe=black,,colback=white,size=small]
\begin{definition}[Finite memory information state]\label{def:finite-momory-information}
The finite memory information state at time $k$ is defined as
\begin{align*}
&{\bf I}(k)=:({\bf x}(k),{\bf x}(k-1),\ldots ,{\bf x}(k - L))\in {\cal I}= X^{L+1}.
\end{align*}
\end{definition}
\end{tcolorbox}

\cref{fig:information-state} illustrates its concept and how to implement it in practice. It is known that the finite memory information structure in~\cref{def:finite-momory-information}
can alleviate the problem related to the POMDP~\cite{mnih2015human}. In particular, roughly speaking, it can reduce the sensitivity of the next state's distribution with respect to the current state.

\subsection{Output-Feedback Policy}

A deterministic control policy $\pi: {\cal I} \to U$ is a map from
the information space $\cal I$ to the control space $U$. In this paper, the set
of all admissible control policies is denoted
by $\Pi$. In addition, the sequence of control policies
$(\pi_0,\pi_1,\ldots)\in \Pi^\infty$ is denoted by $\bar
\pi$. If $\pi_0=\pi_1= \ldots = \pi$, then $\bar\pi$ or $\pi$ is
called a stationary control policy. In this case,
$\bar\pi$ will be simply denoted by $\pi$ if there exists no confusion. For the MDP, the episode is defined as a single realization of the state-action-reward trajectory. 
\begin{definition}[Episode]\label{def:episod}
An episode of the MDP~\eqref{eq:stochastic-system} under any
policy $\bar \pi$ is defined as the process
$({\bf x}(k),{\bf s}(k),{\bf u}(k),{\bf r}({\bf x}(k),{\bf u}(k)))_{k =
0}^\tau$, where $\tau$ is the random stopping time.
\end{definition}

In~\cref{def:episod}, $\tau$ may be finite or not. In this paper, the stopping time is defined as the final time step before the first time instant that continuous state exists $X$.
\begin{assumption}[Stopping time]\label{assump:episodic}
The stopping time $\tau$ is defined as the first time step such that ${\bf
x}(\tau+1) \notin X$.
\end{assumption}

\cref{assump:episodic} is useful when we consider $X$ which is a
strict subset of ${\mathbb R}^n$, where $n$ is the dimension of
$X$. This is the case if we use a linear function
approximation~\cite{geramifard2013tutorial} which locally
approximates the value function or consider a compact $X$ to
guarantee the stability of approximate dynamic programming algorithms. Since the stopping time depends on the initial state ${\bf I}(0)$ and the policy $\bar \pi$, it will be denoted by $\tau ({\bf I}(0);\bar \pi )$ throughout the paper, while for brevity, $\tau$ will be used when it is clear from the context. When ${\bf x}(0) \notin X$, we set $\tau = -1$.

\subsection{Problem Statement}
For $I \in {\cal I}$, the value associated with a given $\bar\pi\in \Pi^\infty$ is defined as
\begin{align}
&J^{\bar\pi}(I): = {\mathbb E}\left[ \left. \sum_{i=0}^{\tau({\bf I}(0);\bar\pi)} {\gamma^i {\bf r}({\bf
x}(i),\pi_i ({\bf I}(i)))} \right|{\bf I}(0)=I \right],\label{J-alpha}
\end{align}
where $\tau({\bf I}(0);\bar\pi)$ is the stopping time given ${\bf I}(0)$ and $\bar
\pi$, and the expectation is taken with respect to the
episode. The optimal control design problem is stated as follows.
\begin{tcolorbox}[colframe=black,,colback=white,size=small]
\begin{problem}[Optimal Decision]
Consider the finite memory information state in~\cref{def:finite-momory-information}. Find $\pi$ such that
\begin{align*}
&\pi^*(I):=\arginf_{\pi\in \Pi} J^\pi(I),
\end{align*}
for all $I\in {\cal I}$.
\end{problem}
\end{tcolorbox}

In this paper, the optimal cost will be denoted by $J^*(I):= J^{\pi^*}(I)$.

\section{Dynamic Programming with Markov Property}\label{section:dynamic programming}

In this section, we study dynamic programming (DP) approaches to
find the optimal cost function and its corresponding stationary
control policy under the Markov assumption. In this section, we consider the finite memory information state in~\cref{def:finite-momory-information}.
\begin{assumption}\label{assump:Markov-property}
The process $({\bf I}(k),{\bf u}(k))_{k=0}^\tau$ satisfies the Markov
property.
\end{assumption}

In other words, $({\bf I}(k),{\bf u}(k))_{k=0}^\tau$ is determined based on an MDP with the transition density function $p_{\bf I} (I'|I,u)$ from the current information $I$ to $I'$
given $u$.
Under~\cref{assumption0}, the quantity~\eqref{J-alpha} is always
finite, and hence well defined. The first property of $J^*$ is its
boundedness on ${\cal I}$.
\begin{proposition}\label{thm:bound-on-J*}
$J^*\leq M/(1-\gamma)$ on ${\cal I}$.
\end{proposition}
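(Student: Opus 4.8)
\textit{Proof plan.} The plan is to establish the stronger fact that \emph{every} policy $\bar\pi\in\Pi^\infty$ satisfies $J^{\bar\pi}(I)\le M/(1-\gamma)$ for all $I\in{\cal I}$; the claim then follows immediately by taking the infimum over $\bar\pi$ in the definition of $J^*=J^{\pi^*}$. So fix $I\in{\cal I}$ and an arbitrary $\bar\pi$, and work with the expectation in~\eqref{J-alpha} taken over the episode started from ${\bf I}(0)=I$.

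First I would rewrite the sum-up-to-random-stopping-time as an infinite sum with indicators, $\sum_{i=0}^{\tau}\gamma^i{\bf r}({\bf x}(i),\pi_i({\bf I}(i)))=\sum_{i=0}^{\infty}\gamma^i{\bf r}({\bf x}(i),\pi_i({\bf I}(i)))\,\mathbf{1}\{\tau\ge i\}$, and interchange the sum and the expectation (justified by monotone/dominated convergence using the boundedness of the expected reward together with the geometric weights $\gamma^i$, $\gamma\in[0,1)$). It then suffices to bound each term $\gamma^i\,{\mathbb E}[{\bf r}({\bf x}(i),{\bf u}(i))\,\mathbf{1}\{\tau\ge i\}\mid{\bf I}(0)=I]$ by $\gamma^i M$.

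The key point for the per-term bound is a measurability/tower argument around $\tau$. By \cref{assump:episodic}, $\{\tau\ge i\}=\{{\bf x}(1)\in X,\ldots,{\bf x}(i)\in X\}$, so the event $\{\tau\ge i\}$ is determined by the history up to time $i$; in particular it is fixed before the reward ${\bf r}({\bf x}(i),{\bf u}(i))$ is drawn, and the action ${\bf u}(i)=\pi_i({\bf I}(i))$ is a function of that history as well. Conditioning on this history, using ${\mathbb E}[{\bf r}(x,u)\mid x,u]=R(x,u)$ and \cref{assumption0}, gives ${\mathbb E}[{\bf r}({\bf x}(i),{\bf u}(i))\,\mathbf{1}\{\tau\ge i\}\mid{\bf I}(0)=I]\le M\,{\mathbb P}[\tau\ge i\mid{\bf I}(0)=I]\le M$. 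Summing the geometric series $\sum_{i=0}^{\infty}\gamma^i M=M/(1-\gamma)$ then yields $J^{\bar\pi}(I)\le M/(1-\gamma)$, and taking the infimum over $\bar\pi$ finishes the proof.

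As for difficulty, the statement is essentially a direct consequence of \cref{assumption0} and there is no real obstacle; the only place requiring care is the bookkeeping with the random stopping time $\tau$ — namely justifying the sum/expectation interchange and checking that $\{\tau\ge i\}$ lies in the pre-reward history so that the inner conditional expectation of ${\bf r}$ can be replaced by $R\le M$. (A matching lower bound $J^*\ge 0$ would follow the same way from $R\ge 0$, but it is not claimed here.)
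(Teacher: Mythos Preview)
Your proposal is correct and follows the same approach as the paper's proof, which is a one-line computation: $J^*(I)\le\sum_{i=0}^\infty\gamma^i M=M/(1-\gamma)$ directly from the definition~\eqref{J-alpha} and \cref{assumption0}. Your careful handling of the random stopping time via indicators, the sum/expectation interchange, and the tower argument is more rigorous than what the paper spells out, but the underlying idea is identical.
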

\begin{proof}
By using the definition~\eqref{J-alpha} and~\cref{assumption0}, we
have $J^* (I) \le\sum_{i=0}^\infty {\gamma^i M}= M/(1-\gamma)$.
\end{proof}

Typical DP approaches~\cite{bertsekas1996neuro} convert~\cref{def:episod} into a fixed point problem of a mapping called the Bellman operator. For a given $\pi\in \Pi$, we also define the following Bellman operator:
\begin{align}
&(T_\pi J_0)(I):= R(x(0),\pi(I))\nonumber\\
&+\gamma {\mathbb E}[{\mathbb I}_X ({\bf x}(1)) J_0({\bf
I}(1))|{\bf I}(0)= I,{\bf
u}(0)=\pi(I)],\label{eq:T-pi-operator}
\end{align}
where ${\mathbb I}_X:X\to \{0,1\}$ is the
indicator function,
\begin{align*}
{\mathbb I}_X(x) = \begin{cases}
 1 \quad {\rm if} \quad x\in X\\
 0 \quad {\rm otherwise}\\
 \end{cases}
\end{align*}
and
\begin{align*}
{\bf I}(0)&= ({\bf x}(0),{\bf x}(-1),\ldots,{\bf x}(-L)),\\
{\bf I}(1)&= ({\bf x}(1),{\bf x}(0),\ldots,{\bf x}(-L+1)),\\
I&= (x(0),x(-1),\ldots,x(-L)),\\
I'&= (x(1),x(0),\ldots,x(-L+1)).
\end{align*}

In~\eqref{eq:T-pi-operator}, the indicator function is included to take into account the exit event. Then, the value function $J^\pi$ in~\eqref{J-alpha} corresponding to
$\pi$ satisfies $J^\pi = T_\pi J^\pi$, which is called the Bellman equation.
\begin{tcolorbox}[colframe=black,,colback=white,size=small]
\begin{theorem}\label{thm:Bellman-eq}
$J^\pi = T_\pi J^\pi$ holds.
\end{theorem}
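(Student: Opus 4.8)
The plan is to establish the fixed-point identity by the standard one-step (tower-property) decomposition of the discounted cost~\eqref{J-alpha}: condition on the first transition, peel off the immediate reward, and then invoke the Markov assumption in~\cref{assump:Markov-property} to recognize the remaining tail as $J^\pi$ evaluated at the next information state ${\bf I}(1)$.

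Concretely, I would start from $J^\pi(I)={\mathbb E}[\sum_{i=0}^{\tau}\gamma^i{\bf r}({\bf x}(i),\pi({\bf I}(i)))\mid{\bf I}(0)=I]$ and split off the $i=0$ term. Since ${\bf I}(0)=I$ with $x(0)\in X$ forces $\tau\ge 0$ by~\cref{assump:episodic}, that term is always present and contributes ${\mathbb E}[{\bf r}({\bf x}(0),\pi(I))]=R(x(0),\pi(I))$, the first summand of $T_\pi$ in~\eqref{eq:T-pi-operator}. For the tail $\sum_{i=1}^{\tau}\gamma^i{\bf r}({\bf x}(i),\pi({\bf I}(i)))$, the crucial structural fact — a direct consequence of the definition of the stopping time in~\cref{assump:episodic} — is a ``restart'' identity: on the event $\{{\bf x}(1)\notin X\}$ one has $\tau=0$ and the tail vanishes, while on $\{{\bf x}(1)\in X\}$ one has $\tau=1+\tau({\bf I}(1);\pi)$, where $\tau({\bf I}(1);\pi)$ is the stopping time of the trajectory re-initialized at ${\bf I}(1)$. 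Hence the tail equals ${\mathbb I}_X({\bf x}(1))\,\gamma\sum_{j=0}^{\tau({\bf I}(1);\pi)}\gamma^j{\bf r}({\bf x}(j+1),\pi({\bf I}(j+1)))$.

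Next I would apply the tower property, conditioning on the information up to time $1$. Because $\pi$ is stationary and deterministic, ${\bf u}(1)=\pi({\bf I}(1))$ is a measurable function of ${\bf I}(1)$, and $({\bf I}(k),{\bf u}(k))_{k\ge 0}$ evolves through the kernel $p_{\bf I}$; therefore~\cref{assump:Markov-property} forces the conditional expectation of $\sum_{j\ge 0}\gamma^j{\bf r}({\bf x}(j+1),\pi({\bf I}(j+1)))$ given the history up to time $1$ to depend only on ${\bf I}(1)$, and to equal $J^\pi({\bf I}(1))$ by definition~\eqref{J-alpha}. Pulling the ${\mathbb I}_X({\bf x}(1))$ factor — itself a function of ${\bf I}(1)$ — back inside, the tail contributes $\gamma\,{\mathbb E}[{\mathbb I}_X({\bf x}(1))J^\pi({\bf I}(1))\mid{\bf I}(0)=I,{\bf u}(0)=\pi(I)]$, which is exactly the second summand of~\eqref{eq:T-pi-operator}. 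Adding the two pieces gives $J^\pi=T_\pi J^\pi$.

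The part requiring the most care is the rigor of interchanging the (possibly infinite) summation with expectation and of the conditioning manipulations in the presence of the random stopping time $\tau$. I would control this using~\cref{assumption0} ($0\le R\le M$) together with $\gamma\in[0,1)$: every partial sum is dominated by $M/(1-\gamma)$, so dominated convergence and Fubini legitimize the exchanges, and $J^\pi$ is finite as already observed. The ``restart'' identity for $\tau$ must likewise be proved as an almost-sure, event-wise identity rather than merely in expectation, so that it can be combined cleanly with the conditional-expectation step; this bookkeeping, not any deep idea, is the main obstacle.
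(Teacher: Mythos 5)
Your proposal is correct and follows essentially the same route as the paper's proof: peel off the $i=0$ reward, insert the indicator ${\mathbb I}_X({\bf x}(1))$ via the restart identity $\tau({\bf I}(0);\pi)=\tau({\bf I}(1);\pi)+1$ on $\{{\bf x}(1)\in X\}$, re-index, and use the Markov property/tower rule to identify the conditional tail as $J^\pi({\bf I}(1))$. The only difference is that you make explicit the dominated-convergence and almost-sure bookkeeping that the paper leaves implicit, which is a harmless (indeed welcome) addition.
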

\end{tcolorbox}

Similarly, for any bounded function $J_0:{\cal I}\to {\mathbb R}_+$, we can define
the static operator
\begin{align}
&(TJ_0)(I)\nonumber\\
&:=\inf_{u \in U} \{R(x(0),u)\nonumber\\
& + \gamma {\mathbb E}[{\mathbb I}_X ({\bf x}(1))J_0({\bf I}(1))|{\bf I}(0)=I,{\bf u}(0)=u]\},\nonumber\\
&:=\inf_{u \in U} \left\{R(x(0),u)+\gamma \int_{x(1)\in X} {J_0(I')p_{\bf x}
(x(1)|I,u)dx(1)}\right\},\label{eq:T-operator}
\end{align}

Define the space of bounded
functions $J:{\cal I}\to{\mathbb R}_+$ as ${\cal M}:=\{J:{\cal
I}\to {\mathbb R}_+:J <\infty \}$. It can be proved that $({\cal
M},d)$ is a complete metric space~\cite[pp.~301]{csuhubi2003preliminaries} with the metric
\begin{align*}
&d(J,J'):=\sup_{I\in {\cal I}}|J(I)-J'(I)|.
\end{align*}
In the following, we prove that the optimal cost $J^*$
uniquely satisfies $TJ^*= J^*$ called the Bellman's equation, and the
sequence, $(J_k)_{k=0}^\infty$, generated by the DP algorithm, $J_{k+1}=T J_k,J_0\equiv 0$ (called value iteration), converges to $J^*$ under~\cref{assumption0}. We note that all proofs of this paper are contained in Appendix of the online supplemental material~\cite{lee2018dynamic}.

\begin{tcolorbox}[colframe=black,,colback=white,size=small]
\begin{theorem}[Convergence]\label{thm:convergence-of-DP}
The sequence $(J_k)_{k=0}^\infty$ generated by the DP algorithm,
$J_{k+1}(I)=(TJ_k)(I)$, $I\in {\cal I}$ with $J_0\equiv 0$
uniformly converges to $J^*$ w.r.t. the metric $d$.
\end{theorem}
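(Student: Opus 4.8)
The plan is to prove that the static Bellman operator $T$ in~\eqref{eq:T-operator} is a $\gamma$-contraction on the complete metric space $({\cal M},d)$, invoke the Banach fixed point theorem to obtain a unique fixed point $\bar J\in{\cal M}$ toward which $T^k J_0$ converges uniformly, and then identify $\bar J$ with $J^*$. First I would check that $T$ maps ${\cal M}$ into itself: nonnegativity of $(TJ_0)(I)$ is immediate from $R\ge 0$ (\cref{assumption0}), $\gamma\ge 0$, and nonnegativity of the conditional expectation of the nonnegative integrand ${\mathbb I}_X({\bf x}(1))J_0({\bf I}(1))$, while boundedness follows from $R\le M$ and ${\mathbb I}_X\le 1$, giving $(TJ_0)(I)\le M+\gamma\, d(J_0,0)<\infty$ for all $I$.

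Next, for $J,J'\in{\cal M}$, using $|\inf_u f(u)-\inf_u g(u)|\le\sup_u|f(u)-g(u)|$ together with $0\le{\mathbb I}_X\le 1$, I would obtain, uniformly in $I\in{\cal I}$,
\[
|(TJ)(I)-(TJ')(I)|\le\gamma\sup_{u\in U}{\mathbb E}\bigl[{\mathbb I}_X({\bf x}(1))\,|J({\bf I}(1))-J'({\bf I}(1))|\,\big|\,{\bf I}(0)=I,{\bf u}(0)=u\bigr]\le\gamma\, d(J,J'),
\]
so that $d(TJ,TJ')\le\gamma\, d(J,J')$ with $\gamma\in[0,1)$. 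Since $({\cal M},d)$ is complete, the Banach fixed point theorem yields a unique $\bar J\in{\cal M}$ with $T\bar J=\bar J$ and $J_k=T^k J_0\to\bar J$ uniformly (w.r.t.\ $d$) for every $J_0\in{\cal M}$, in particular for $J_0\equiv 0$; the geometric rate $d(J_k,\bar J)\le\gamma^k d(J_0,\bar J)$ comes for free. (As a sanity check, since $J_1=T0\ge 0=J_0$ and $T$ is monotone, $(J_k)$ is in fact nondecreasing and bounded above by $M/(1-\gamma)$.)

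It then remains to show $\bar J=J^*$. For the inequality $\bar J\le J^*$: for each $\pi\in\Pi$ we have $\bar J=T\bar J\le T_\pi\bar J$, since $T$ is the pointwise infimum over $u$ of the maps defining $T_\pi$; because $T_\pi$ is monotone, iterating gives $\bar J\le T_\pi^n\bar J$ for all $n$, and since $T_\pi$ is itself a $\gamma$-contraction with fixed point $J^\pi$ (\cref{thm:Bellman-eq}), letting $n\to\infty$ gives $\bar J\le J^\pi$; taking the infimum over $\pi$ yields $\bar J\le J^*$. For the reverse inequality $J^*\le\bar J$: given $\varepsilon>0$, for each $I$ I would select an action that attains the infimum defining $(T\bar J)(I)$ up to $\varepsilon(1-\gamma)$, assemble these selections into a stationary policy $\pi_\varepsilon\in\Pi$ so that $T_{\pi_\varepsilon}\bar J\le\bar J+\varepsilon(1-\gamma)$, and then iterate $T_{\pi_\varepsilon}$, using $T_{\pi_\varepsilon}(J+c)\le T_{\pi_\varepsilon}J+\gamma c$ for constants $c\ge 0$, to get $J^{\pi_\varepsilon}=\lim_n T_{\pi_\varepsilon}^n\bar J\le\bar J+\varepsilon$; hence $J^*\le J^{\pi_\varepsilon}\le\bar J+\varepsilon$, and $\varepsilon\downarrow 0$ concludes. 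The step I expect to be the main obstacle is this final one: guaranteeing that the near-optimal action selection can be carried out so as to yield an \emph{admissible} policy $\pi_\varepsilon\in\Pi$ (a measurable-selection issue when $U$ is a continuum), together with verifying that the stopping-time bookkeeping encoded by ${\mathbb I}_X$ in the iterates of $T_\pi$ is consistent with the random-horizon definition of $J^\pi$ in~\eqref{J-alpha}; by contrast, the contraction estimate and the completeness of $({\cal M},d)$ are routine.
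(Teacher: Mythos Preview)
Your proposal is correct and shares the paper's overall architecture: verify that $T$ is a $\gamma$-contraction on $({\cal M},d)$, invoke Banach to obtain a unique fixed point and uniform convergence of the iterates, then identify the fixed point with $J^*$. Where you diverge is in the identification step. The paper proves two intermediate lemmas giving a closed-form truncated-sum representation $T_{\pi_0}\cdots T_{\pi_k}J_0(I)={\mathbb E}\big[\sum_{i=0}^{\min\{\tau,k\}}\gamma^i{\bf r}\,\big|\,{\bf I}(0)=I\big]$ and then passes to the limit via dominated convergence to obtain $\lim_k T_{\pi_0}\cdots T_{\pi_k}J_0=J^{\bar\pi}$; this is precisely the ``stopping-time bookkeeping'' you flagged, carried out explicitly. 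With that in hand, the paper defines the greedy policy $\pi_\infty$ w.r.t.\ $J_\infty$ (assuming the infimum is attained) and sandwiches $J^{\pi_\infty}\le J_\infty\le J^{\bar\pi}$ for all non-stationary $\bar\pi$. You instead bypass those lemmas by leaning on \cref{thm:Bellman-eq}: since $T_\pi$ is a $\gamma$-contraction and $J^\pi$ is a fixed point, it is \emph{the} fixed point, so $T_\pi^n\bar J\to J^\pi$ directly, and the sandwich follows without revisiting the random horizon. Your $\varepsilon$-greedy argument for $J^*\le\bar J$ is also more general than the paper's, which tacitly assumes $\arginf_{u}$ exists. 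The trade-off: the paper's route is self-contained within this section and yields the slightly stronger conclusion $J_\infty\le J^{\bar\pi}$ for non-stationary $\bar\pi$, while yours is shorter but presupposes \cref{thm:Bellman-eq} (which the paper proves independently anyway). Your concern about measurable selection is apt, but note it applies equally to the paper's $\pi_\infty$; neither version resolves it beyond what ``admissible'' in the definition of $\Pi$ allows.
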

\end{tcolorbox}
\begin{remark}
For MDPs where continuous and discrete state-spaces coexist and
are coupled, a convergence result of the DP was
addressed in~\cite[Theorem~2,Theorem~3]{kolmanovsky2008discrete}.
However, the proof in~\cite{kolmanovsky2008discrete} cannot be
directly applied to our case as the MDP has continuous stochastic
disturbances for the system
in~\eqref{eq:stochastic-system}.
\end{remark}

If $J^*$ is known, then the optimal control policy can be
recovered by using
\begin{align}
&\pi^*(I):=\inf_{u\in U} \{R(x(0),u)\nonumber\\
&+\gamma {\mathbb E}[{\mathbb I}_X ({\bf x}(1))J^*({\bf
I}(1))|{\bf I}(0)=I,{\bf u}(0)=u]\}\label{eq:optimal-policy}
\end{align}
provided that the infimum is attained, and this is the case when
$U$ is a discrete and finite set or when $U$ is compact. A
disadvantage of the policy recovery form~\eqref{eq:optimal-policy}
lies in the fact that it relies on the model knowledge. A way to overcome this difficulty is to use the
so-called action-value function or
Q-function~\cite[pp.~192]{bertsekas1995dynamic},~\cite{watkins1989learning}.
In particular, for a given policy $\pi$ and corresponding
$J^\pi$, the Q-function, $Q^\pi:{\cal I} \times
U\to{\mathbb R}_+$, is defined as
\begin{align*}
&Q^\pi(I,u)\\
&:= R(x(0),u)+\gamma {\mathbb E}[{\mathbb I}_X ({\bf
x}(1))J^\pi({\bf I}(1))|{\bf I}(0)=I,{\bf u}(0)=u].
\end{align*}
Since $J^\pi = T_\pi J^\pi$, comparing the above equation
with~\eqref{eq:T-pi-operator}, one can prove $J^\pi (I) =
Q^\pi(I,\pi(I))$ and
\begin{align*}
&Q^\pi(I,u) = R(x(0),u)\\
&+\gamma {\mathbb E}[{\mathbb I}_X ({\bf x}(1))Q^\pi({\bf
I}(1),\pi({\bf I}(1)))|{\bf I}(0)=I,{\bf u}(0)=u].
\end{align*}
Similarly, the optimal Q-function is defined as
\begin{align}
&Q^*(I,u)\nonumber\\
&:= R(x(0),u)+\gamma {\mathbb E}[{\mathbb I}_X ({\bf
x}(1))J^*({\bf I}(1))|{\bf I}(0)=I,{\bf u}(0)=u].
\label{eq:Q-factor}
\end{align}
By comparing this definition with~\eqref{eq:optimal-policy}, the
optimal policy can be expressed as $\pi^*(I): = \arginf_{u'\in U}
Q^*(I,u')$. In addition, by using the definition of $Q$-function and
$J^*=TJ^*$, the optimal cost can be represented by $J^*(I) =
\inf_{u\in U} Q^*(I,u)$. Plugging it into~\eqref{eq:Q-factor}
yields
\begin{align}
&Q^*(I,u)=R(x(0),u)\nonumber\\
&+\gamma {\mathbb E}[ {\mathbb I}_X ({\bf x}(1))
\inf_{u'\in U} Q^*({\bf I}(1),u')|{\bf I}(0)=I,{\bf u}(0)u]\nonumber\\
&=:(FQ^*)(I,u)\label{eq:F-operator}
\end{align}
where the expectation is with respect to ${\bf I}(1)$. Then,~\eqref{eq:Q-factor} can be written as the Q-Bellman equation
$Q^*=FQ^*$, which is equivalent to the Bellman equation
$J^*=TJ^*$. The $Q$-value iteration, $Q_{k + 1}=FQ_k$ with $Q_0\equiv 0$,
generates sequence $(Q_k)_{k=0}^\infty$ that converges to $Q^*$
under the same condition as in the DP.
\begin{corollary}[Convergence]
The sequence, $(Q_k)_{k=0}^\infty$, generated by the DP algorithm,
$Q_{k+1}(I,u)=(FQ_k)(I,u)$, $I\in {\cal I}$, $u\in U$, with $Q_0\equiv 0$
uniformly converges to $Q^*$ w.r.t. the metric $d$.
\end{corollary}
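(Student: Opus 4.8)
The plan is to mirror the convergence proof for the value iteration $J_{k+1}=TJ_k$ in~\cref{thm:convergence-of-DP} and, in fact, to reduce the present claim to it. First I would note that, because $Q_0\equiv 0$, an easy induction on $k$ shows that
\begin{align*}
Q_{k+1}(I,u)=R(x(0),u)+\gamma\,{\mathbb E}[{\mathbb I}_X({\bf x}(1))J_k({\bf I}(1))\,|\,{\bf I}(0)=I,{\bf u}(0)=u],
\end{align*}
where $(J_k)_{k=0}^\infty$ is the sequence produced by the DP algorithm with $J_0\equiv 0$. Indeed, taking the infimum over $u\in U$ of the right-hand side gives $(TJ_k)(I)=J_{k+1}(I)$, so $\inf_{u'}Q_k(\cdot,u')=J_k$ holds for every $k\ge 0$; substituting this identity (evaluated at the random point ${\bf I}(1)$) into the definition $Q_{k+1}=FQ_k$ in~\eqref{eq:F-operator} yields the displayed recursion, and the base case $k=0$ is immediate from $Q_0\equiv 0$, $J_0\equiv 0$.

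Next I would subtract the Q-Bellman equation, written via~\eqref{eq:F-operator} as $Q^*(I,u)=R(x(0),u)+\gamma\,{\mathbb E}[{\mathbb I}_X({\bf x}(1))J^*({\bf I}(1))\,|\,{\bf I}(0)=I,{\bf u}(0)=u]$, from the recursion above. The reward terms cancel and I obtain
\begin{align*}
|Q_{k+1}(I,u)-Q^*(I,u)|
&=\gamma\left|{\mathbb E}[{\mathbb I}_X({\bf x}(1))(J_k({\bf I}(1))-J^*({\bf I}(1)))]\right|\\
&\le\gamma\,{\mathbb E}[{\mathbb I}_X({\bf x}(1))]\,d(J_k,J^*)\le\gamma\,d(J_k,J^*).
\end{align*}
Taking the supremum over $(I,u)\in{\cal I}\times U$ gives $d(Q_{k+1},Q^*)\le\gamma\,d(J_k,J^*)$, where $d$ now denotes the sup-metric extended in the obvious way to bounded nonnegative functions on ${\cal I}\times U$. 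By~\cref{thm:convergence-of-DP}, $d(J_k,J^*)\to 0$, hence $d(Q_k,Q^*)\to 0$, which is the asserted uniform convergence.

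A self-contained alternative is also available: the space ${\cal N}:=\{Q:{\cal I}\times U\to{\mathbb R}_+:Q<\infty\}$ with the sup-metric is complete by the same argument used for $({\cal M},d)$; $Q^*$ belongs to ${\cal N}$ since $Q^*\le M/(1-\gamma)$ (argue as in~\cref{thm:bound-on-J*}); and $F$ is a $\gamma$-contraction on ${\cal N}$, using $|\inf_{u'}Q(\cdot,u')-\inf_{u'}Q'(\cdot,u')|\le\sup_{u'}|Q(\cdot,u')-Q'(\cdot,u')|$ together with the fact that ${\mathbb I}_X$ contributes a factor at most one inside the conditional expectation. The Banach fixed point theorem then gives a unique fixed point of $F$ in ${\cal N}$ to which $(Q_k)_{k=0}^\infty$ converges geometrically, and~\eqref{eq:F-operator} identifies that fixed point with $Q^*$.

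The computations are routine; the only steps requiring care are the inductive identity $\inf_{u'}Q_k(\cdot,u')=J_k$ — which hinges on the matched initializations $Q_0\equiv 0$ and $J_0\equiv 0$ keeping the two recursions synchronized — and, in the self-contained version, verifying that $Q^*$ is bounded so that it lies in the metric space. I expect the bookkeeping around the indicator ${\mathbb I}_X$ and the infimum inside the conditional expectation to be the main, if minor, obstacle.
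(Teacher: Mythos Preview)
Your proposal is correct. The paper does not supply a separate proof of this corollary, treating it as an immediate consequence of the $J$-convergence in~\cref{thm:convergence-of-DP} under ``the same condition as in the DP''; your first argument makes this reduction explicit via the synchronized identity $\inf_{u'}Q_k(\cdot,u')=J_k$, which is exactly the intended mechanism, and your alternative contraction argument is a standard self-contained variant.
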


An advantage of the $Q$-value iteration is that once found, the control
policy can be recovered without the model knowledge.
\begin{remark}
In practice, the value function or Q-function can be represented by universal
function approximators~\cite{bertsekas1996neuro}, for example, a deep neural network or radial basis functions. With such an approximator, the convergence proof should be modified for the specific
approximator. However, this topic is out of the scope of this paper. Moreover, to implement DP algorithms in this paper, one needs to integrate over the entire information space in the
definition of the Bellman operators, for instance, $F$ in~\eqref{eq:T-operator}. In practice, reinforcement learning algorithms~\cite{sutton1998reinforcement} can be applied to approximate the Bellman operators by their stochastic approximations, and perform the DP algorithms using the stochastic estimations.
\end{remark}

\section{Dynamic Programming without Markov Property}\label{section:dynamic programming2}
If we consider the POMDP, then the operators
in~\eqref{eq:T-operator} and~\eqref{eq:T-pi-operator} are not well
defined. In~\cite{singh1994learning}, the authors considered a
behavior policy $\pi_b$ and the corresponding limiting stationary
distribution of the MDP~\eqref{eq:stochastic-system}
\begin{align*}
&\mathop {\lim }_{k \to \infty } p_{\bf x} ( \cdot
|{\bf{x}}(k),{\bf s}(k),\pi_b ({\bf{I}}(k))) = \xi_{\bf x}(\cdot;\pi _b),\\
&\mathop {\lim }_{k \to \infty } p_{\bf s} ( \cdot
|{\bf{s}}(k),{\bf{x}}(k)) = \xi_{\bf s}( \cdot ;\pi _b),
\end{align*}
where $\xi_{\bf x}(\cdot;\pi _b)$ is the stationary distribution of the continuous state and $\xi_{\bf s}( \cdot ;\pi_b)$ is the stationary distribution of the discrete space under the behavior policy $\pi_b$ provided that they exist.

Then, the probability density of the next continuous state ${\bf
x}(k+1)$ given current state ${\bf
x}(k)=x$ and action ${\bf u}(k)=u$ is
\begin{align*}
&{\mathbb P}[{\bf x}(k + 1) = \cdot |{\bf{x}}(k) = x,{\bf u}(k) =
u]\\
& = \sum_{s \in S} {p_{\bf x} ( \cdot |x,s,u)\xi_{\bf
s}(s;\pi_b)}.
\end{align*}
Therefore, an information state transition density
function $p_{\bf I} (I'|I,u;\pi_b)$ is well defined, and the
operator corresponding to~\eqref{eq:F-operator} can be defined in
this setting. The DP solution under this condition is similar to how the standard Q-learning operates with POMDPs (see~\cite[Theorem~2]{singh1994learning}).

To consider a more generic scenario, a different
approach is adopted in this paper. For the analysis, the concept of the belief state is introduced first. The belief state~\cite{hauskrecht2000value}, ${\bf b}(k)\in
\Delta_{|S|}$, at time $k$ is defined as the probability
distribution of ${\bf z}(k)$ at time $k$.
\begin{tcolorbox}[colframe=black,,colback=white,size=small]
\begin{definition}[Belief state]
Consider the MDP~\eqref{eq:stochastic-system}. The belief state, $({\bf b}(k))_{k = 0}^\tau$, is defined by the recursion
\begin{align*}
&{\bf b}(k + 1) = P({\bf{x}}(k))^T {\bf{b}}(k),\quad b(0)= \rho_d.
\end{align*}
\end{definition}
\end{tcolorbox}

Note that $({\bf b}(k))_{k=0}^\tau$ is also a stochastic process due
to the randomness of ${\bf x}(k)$. In particular, each
${\bf b}(k)$ is defined on the probability space $(\Omega
,{\cal F},v)$ such that $\Omega = \Delta _{|S|}$, $v(F) =
{\mathbb P}[{\bf b}(k)\in F],\forall F \in {\cal F}$, and
${\cal F}$ is a $\sigma$-algebra on $\Delta _{|S|}$. The sequence $({\bf b}(k))_{k=0}^\tau$ corresponds to a single realization of $({\bf x}(k))_{k =
0}^\tau$ under a fixed policy $\pi$.

When the model and the current state are exactly known, then the next belief state can be computed at every time step in a deterministic fashion based on the current belief state. With the deterministic belief state propagation, a new DP is introduced in the next subsection.

\subsection{Known Belief State}
Assuming that the belief state is known, we consider the particular information structure in this subsection
\begin{align*}
{\bf\hat I}(k) =:({\bf x}(k),{\bf b}(k))\in {\cal\hat I}=X \times\Delta_{|S|}.
\end{align*}
Then, $({\bf\hat I}(k))_{k=0}^\tau$ is an MDP, i.e., its evolution can be
expressed as ${\bf\hat I}(k+1)\sim {\mathbb P}[{\bf\hat
I}(k+1)=\hat I(k+1)|{\bf\hat I}(k)=\hat I(k),{\bf u}(k)=u]$ as illustrated
in~\cref{fig:stochastic-system2}. 
\begin{figure}[h!]
\centering\epsfig{figure=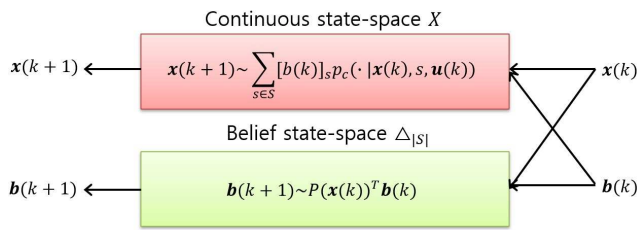,width=8.5cm} \caption{MDP
with belief state.}\label{fig:stochastic-system2}
\end{figure}
Therefore, by~\cref{thm:convergence-of-DP}, the DP
\begin{align}
&\hat Q_{k+1}(\hat I,u) = (\hat F \hat Q_k)(\hat I,u),\quad \hat Q_0(\hat I,u) \equiv 0,\label{eq:belief-DP}
\end{align}
converges to $\hat Q^*(\hat I,u)$, where
\begin{align}
&(\hat F \hat Q_0)(\hat I,u)= R(x,u)\nonumber\\
& + \gamma \int_{x' \in X} {\inf_{u' \in U}
Q_0(\hat I',u')\sum_{s \in S} {[b]_s} p_{\bf x}
(x'|x,s,u)dx'},\label{eq:app2:eq1}
\end{align}
$\hat I= (x,b)$, $\hat I'= (x',b')$, $[b]_s$ is the $s$th element of $b$, $b' = P(x)^T b$, and
$x'$ is the next state corresponding to $\hat I'$. The optimal solution of the DP~\eqref{eq:belief-DP} may give better performance compared to a DP solution, if exists, without the belief state information. In the next subsection, we introduce a DP-like algorithm without the belief state information, which may not have a fixed point solution. However, we establish a convergence of the algorithm to a set around the optimal solution $\hat Q^*$.

\subsection{Unknown Belief State}

In this subsection, we consider the case that the belief state is unknown. In this case, we define a sequence of operators $({\bf F}^{(k)})_{k=0}^\tau$ associated with a sequence of the belief states $({\bf b}(k))_{k=0}^\tau$.
\begin{tcolorbox}[colframe=black,,colback=white,size=small]
\begin{assumption}\label{assump:belief-state-sequence}
$({\bf b}(k))_{k=0}^\tau$ is a sequence belief states corresponding to a single realization of the episode under a fixed (behavior) policy $\pi$.
\end{assumption}
\end{tcolorbox}
\begin{tcolorbox}[colframe=black,,colback=white,size=small]
\begin{definition}
For any bounded $Q_0:{\cal I} \times U\to{\mathbb R}_+$, define a sequence of operators $({\bf F}^{(k)})_{k=0}^\tau$ associated with the sequence of belief states, $({\bf b}(k))_{k=0}^\tau$ in~\cref{assump:belief-state-sequence}, as
\begin{align}
&({\bf F}^{(k)} Q_0 )(I,u) = R(x,u)\nonumber\\
&+ \gamma \int_{x' \in X} \inf_{u' \in U} Q_0
(I',u')\nonumber\\
& \times\sum_{s\in S} {[{\bf b}(k)]_s } p_{\bf x}
(x(1)|x(0),s,u)dx(1),\label{eq:F-operator-non-Markov}
\end{align}
where
\begin{align*}
{\bf I}(0)&= ({\bf x}(0),{\bf x}(-1),\ldots,{\bf x}(-L)),\\
{\bf I}(1)&= ({\bf x}(1),{\bf x}(0),\ldots,{\bf x}(-L+1)),\\
I&= (x(0),x(-1),\ldots,x(-L)),\\
I'&= (x(1),x(0),\ldots,x(-L+1)).
\end{align*}

\end{definition}
\end{tcolorbox}

Note that the sequence of operators, $({\bf F}^{(k)})_{k=0}^\tau$, is stochastic as each ${\bf F}^{(k)}$ depends on ${\bf b}(k)$, and
the DP
\begin{align}
&{\bf Q}_{k+1}={\bf F}^{(k)} {\bf Q}_k,\quad  {\bf Q}_0 \equiv 0\label{eq:DP-POMDP}
\end{align}
may not converge in general as ${\bf F}^{(k)}$ is time-varying. However, under certain
conditions, we can obtain a bounded set around $\hat Q^*$ to which $({\bf
Q}_{k})_{k=0}^\tau$ converges as $\tau \to \infty$. For convenience, new notations are adopted.
Define
\begin{align}
\beta(I,b)&:={\mathbb P}[{\bf s}(0)=\cdot|{\bf I}(0)=
I,{\bf b}(-L)=b]\nonumber\\
&= P(x(0))^T \times\cdots \times P(x(-L))^T
b.\label{eq:beta-notation}
\end{align}

In~\eqref{eq:beta-notation}, $b\in \Delta_{|S|}$ represents the belief state at time
$-L$, and $\beta(I,b)$ implies the belief state at time $0$ given ${\bf I}(0) = I$ and ${\bf b}(-L) = b$. The following result establishes the fact that $({\bf
Q}_{k})_{k=0}^\tau$ converges to a bounded set around $\hat Q^*$ as $\tau \to \infty$.
\begin{theorem}\label{thm:Q-error-bound}
We assume that $\tau = \infty$. For any $L >0$, let $l_L\in {\mathbb R}_+$ be a Lipschtz constant
such that
\begin{align}
&\|\beta(I,b)-\beta(I,b') \|_\infty \le l_L \|b-b'
\|_\infty,\nonumber\\
&\forall b,b'\in \Delta_{|S|},\quad I\in X^{L+1}.\label{eq:Lipschitz-const}
\end{align}

Let $l_L^*$ be the infimum over all such constants $l_L$. Moreover, let $({\bf Q}_{k})_{k=0}^\infty$ be the sequence generated by the DP~\eqref{eq:DP-POMDP}. Let $\hat Q^*$ be the optimal
solution generated by the DP~\eqref{eq:belief-DP}. For all $k\geq 0$, the worst error bound
is given by
\begin{align*}
&\sup_{u \in U,I\in X^{L + 1} ,b \in \Delta
_{|S|} } |{\bf Q}_k (I,u) - \hat Q^* ((x(0),\beta(I,b)),u)|\\
& \le \frac{\gamma (1 - \gamma ^k )M |S|^2}{(1 - \gamma )^2
}l_L^* + \frac{\gamma ^k M}{1 - \gamma}
\end{align*}
with probability one, where $x(0)$ is the first element of the tuple
$I$, $b$ corresponds to the belief state at time $-L$. In the
limit $k \to \infty$, we obtain
\begin{align*}
&\limsup_{k \to \infty }\\
& \sup_{u \in U,I\in X^{L + 1} ,b \in \Delta _{|S|} } |{\bf{Q}}_k (I,u) -
\hat Q^* ((x(0),\beta (I,b)),u)|\\
&\le \frac{{M\gamma |S|^2 }}{{(1 - \gamma )^2 }}l_L^*
\end{align*}
with probability one.
\end{theorem}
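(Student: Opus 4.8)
The plan is to prove the finite-$k$ inequality by induction on the iteration index $k$ and then let $k\to\infty$ for the $\limsup$ statement. Set
\[
 e_k := \sup_{u\in U,\ I\in X^{L+1},\ b\in\Delta_{|S|}} \bigl| {\bf Q}_k(I,u) - \hat Q^*((x(0),\beta(I,b)),u) \bigr|,
\]
so the target is $e_k \le \frac{\gamma(1-\gamma^k)M|S|^2}{(1-\gamma)^2} l_L^* + \frac{\gamma^k M}{1-\gamma}$. The base case is immediate: ${\bf Q}_0\equiv 0$, while $0\le \hat Q^*\le M/(1-\gamma)$ by the same argument as in \cref{thm:bound-on-J*} applied to the belief MDP of \cref{fig:stochastic-system2}, so $e_0\le M/(1-\gamma)$, which is exactly the asserted bound at $k=0$.

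For the inductive step I would expand one application of each operator. Since $\hat Q^*$ is the fixed point of \eqref{eq:app2:eq1}, $\hat Q^*((x(0),\beta(I,b)),u)$ equals $R(x(0),u)$ plus $\gamma\int_{x(1)\in X}\inf_{u'}\hat Q^*((x(1),P(x(0))^T\beta(I,b)),u')\sum_{s}[\beta(I,b)]_s p_{\bf x}(x(1)|x(0),s,u)\,dx(1)$, while ${\bf Q}_{k+1}(I,u)=({\bf F}^{(k)}{\bf Q}_k)(I,u)$ is $R(x(0),u)$ plus the same integral but with $\inf_{u'}{\bf Q}_k(I',u')$ in place of $\inf_{u'}\hat Q^*(\cdots)$ and with the belief ${\bf b}(k)$ in place of $\beta(I,b)$. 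The reward terms cancel, and I would bound the difference of the integrals by inserting and removing $\gamma\int_{x(1)\in X}\inf_{u'}\hat Q^*((x(1),P(x(0))^T\beta(I,b)),u')\sum_s[{\bf b}(k)]_s p_{\bf x}(x(1)|x(0),s,u)\,dx(1)$, which splits the error into (i) a \emph{value-propagation} term and (ii) a \emph{belief-mismatch} term. For (i), the bookkeeping identity $P(x(0))^T\beta(I,b)=\beta(I',b'')$ with $I'=(x(1),x(0),\ldots,x(-L+1))$ and $b''\in\Delta_{|S|}$ — which follows directly from \eqref{eq:beta-notation} — lets me apply the induction hypothesis to the integrand, giving $\bigl|\inf_{u'}{\bf Q}_k(I',u')-\inf_{u'}\hat Q^*((x(1),P(x(0))^T\beta(I,b)),u')\bigr|\le e_k$; since $\sum_s[{\bf b}(k)]_s p_{\bf x}(\cdot|x(0),s,u)$ is a sub-probability density on $X$, term (i) is at most $\gamma e_k$.

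The crux, and what I expect to be the main obstacle, is term (ii): bounding $\gamma\int_{x(1)\in X}\inf_{u'}\hat Q^*((x(1),P(x(0))^T\beta(I,b)),u')\,\sum_s\bigl([{\bf b}(k)]_s-[\beta(I,b)]_s\bigr)p_{\bf x}(x(1)|x(0),s,u)\,dx(1)$. Pulling out $|\inf_{u'}\hat Q^*|\le M/(1-\gamma)$ and using that each $p_{\bf x}(\cdot|x(0),s,u)$ is a sub-probability density on $X$, this is at most $\frac{\gamma M}{1-\gamma}\|{\bf b}(k)-\beta(I,b)\|_1\le \frac{\gamma M|S|}{1-\gamma}\|{\bf b}(k)-\beta(I,b)\|_\infty$. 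Now the realized belief ${\bf b}(k)$ is itself obtained by propagating an earlier belief across the last $L$ observations, hence is of the form $\beta(\,\cdot\,,\,\cdot\,)$ evaluated at a genuine prior belief on the same observation window; the Lipschitz hypothesis \eqref{eq:Lipschitz-const} therefore bounds $\|{\bf b}(k)-\beta(I,b)\|_\infty$ by $l_L^*$ times the gap between two stochastic vectors, which after the $\|\cdot\|_\infty$/$\|\cdot\|_1$ conversions costs another factor $|S|$; so with probability one and uniformly in $(I,b)$ one gets $\|{\bf b}(k)-\beta(I,b)\|_\infty\le |S| l_L^*$, whence term (ii) $\le \frac{\gamma M|S|^2}{1-\gamma}l_L^*$. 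Care is needed in aligning the observation window of $I$ with the operator index $k$ and in tracking the norm conversions so that the constant is exactly $M|S|^2/(1-\gamma)$.

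Putting (i) and (ii) together gives the recursion $e_{k+1}\le \gamma e_k + \frac{\gamma M|S|^2}{1-\gamma}l_L^*$, valid with probability one. Unrolling from $e_0\le M/(1-\gamma)$ yields $e_k\le \gamma^k\frac{M}{1-\gamma} + \frac{\gamma M|S|^2 l_L^*}{1-\gamma}\sum_{j=0}^{k-1}\gamma^j = \frac{\gamma^k M}{1-\gamma}+\frac{\gamma(1-\gamma^k)M|S|^2}{(1-\gamma)^2}l_L^*$, which is the finite-$k$ bound. Letting $k\to\infty$, the term $\gamma^k M/(1-\gamma)\to 0$ and $1-\gamma^k\to 1$, so $\limsup_{k\to\infty} e_k\le \frac{M\gamma|S|^2}{(1-\gamma)^2}l_L^*$; since every estimate above holds almost surely, both conclusions hold with probability one.
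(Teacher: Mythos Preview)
Your proposal is correct and follows essentially the same approach as the paper. The paper first isolates the one-step recursion
\[
e_{k+1}\le \gamma e_k + \frac{\gamma M|S|^2}{1-\gamma}l_L^*
\]
as a separate ``basic relation'' lemma and then unrolls it, which is exactly your induction. The only cosmetic difference is the choice of the inserted-and-removed term: the paper adds and subtracts $\gamma\int_{x(1)\in X}\inf_{u'}{\bf Q}_k(I',u')\sum_s[\beta(I,b)]_s\,p_{\bf x}(x(1)|x(0),s,u)\,dx(1)$, so that its belief-mismatch piece carries ${\bf Q}_k$ and its value-propagation piece is integrated against $[\beta(I,b)]_s p_{\bf x}$, whereas you swap the roles and carry $\hat Q^*$ in the mismatch piece and integrate against $[{\bf b}(k)]_s p_{\bf x}$ in the propagation piece. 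Since both ${\bf Q}_k$ and $\hat Q^*$ are bounded by $M/(1-\gamma)$ and both mixtures are (sub-)probability densities, the two splittings yield the identical recursion; the paper also writes ${\bf b}(k)=\beta(I,{\bf b}(k-L))$ explicitly before invoking the Lipschitz bound, which is precisely the ``alignment of the observation window'' you flag as the delicate step.
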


In practice, the discount factor $\gamma \in [0,1)$ is close to
one; thus the error bound may be large. However, $l_L$ is often
small as in many applications~\cite{donghwan2018approximate}.
Then, under some conditions, we have $\lim_{L\to\infty} l_L^*= 0$.
As a simple example, assume that $P(x)$ is a constant matrix $P$
and that the Markov chain has a unique stationary distribution
$\mu$ such that $\mu^T P=\mu^T$. This implies that $\lim_{L\to
\infty}(P^L)^T b =\lim_{L\to\infty} (P^L)^T b'= \mu$ for any $b,
b'\in \Delta_{|S|}$. Therefore, $\lim_{L\to\infty} \| (P^L)^T b -
(P^L)^T b'\|_2=0$, meaning that $\lim_{L \to \infty} l_L^* = 0$.

\section*{Conclusion}
In this paper, we have studied a DP framework for POMDPs with
jointly continuous and discrete state-spaces. A finite observation history has
been used as an information structure of an output-feedback control policy. We have established a convergence of the DP algorithm to a set round an optimal solution. Developments of reinforcement learning algorithms based on the current analysis can be potential future research directions.

\bibliographystyle{IEEEtran}
\bibliography{reference}

\clearpage
\newpage

\onecolumn\

\appendix

Throughout this section, we use the notation
\begin{align*}
{\bf I}(0)&= ({\bf x}(0),{\bf x}(-1),\ldots,{\bf x}(-L)),\\
{\bf I}(1)&= ({\bf x}(1),{\bf x}(0),\ldots,{\bf x}(-L+1)),\\
I&= (x(0),x(-1),\ldots,x(-L)),\\
I'&= (x(1),x(0),\ldots,x(-L+1)).
\end{align*}

\subsection{Proof of~\cref{thm:Bellman-eq}}
By the definition, we have
\begin{align*}
J^{\pi}(I) &:= {\mathbb E}\left[ \left. \sum_{i=0}^{\tau({\bf I}(0);\pi)} \gamma^i {\bf r}({\bf
x}(i),\pi({\bf I}(i)))\right|{\bf I}(0)=I \right]\\
&= {\mathbb E}\left[ {\bf r}({\bf x}(0),\pi ({\bf I}(0))) + \left. \sum_{i=1}^{\tau({\bf I}(0);\pi)} {\gamma^i{\bf r}({\bf x}(i),\pi({\bf I}(i)))} \right|{\bf I}(0)=I \right]\\
&= R(x(0),\pi(I))+\gamma {\mathbb E}\left[\left. {\mathbb I}_X ({\bf x}(1))\sum_{i=1}^{\tau ({\bf I}(0);\pi )} {\gamma ^{i - 1} {\bf r}({\bf x}(i),\pi ({\bf I}(i)))} \right|{\bf I}(0)=I \right]\\
&= R(x(0),\pi(I))+\gamma {\mathbb E}\left[ \left. {\mathbb I}_X ({\bf x}(1))\sum_{i=1}^{\tau ({\bf I}(1);\pi)+ 1} {\gamma^{i-1} {\bf r}({\bf x}(i),\pi({\bf I}(i)))} \right|{\bf I}(0)=I \right]\\
&= R(x(0),\pi(I)) + \gamma {\mathbb E}\left[ \left. {\mathbb I}_X ({\bf x}(1))\sum_{i=0}^{\tau ({\bf I}(1);\pi )} \gamma^i {\bf r}({\bf x}(i+1),\pi ({\bf I}(i+1))) \right|{\bf I}(0) = I \right]\\
&= R(x(0),\pi(I)) + \gamma {\mathbb E}[\left. {\mathbb I}_X ({\bf x}(1))J^\pi({\bf I}(1)) \right|{\bf I}(0)=I ]\\
&= T_\pi J^\pi(I).
\end{align*}

\subsection{Proof of~\cref{thm:convergence-of-DP}}
\label{appendix:proof1} To prove~\cref{thm:convergence-of-DP}, one
needs to first prove that $J_{k} \in {\cal M}$ for all $k\in
{\mathbb N}$ and that $T$ is a contraction map.
\begin{tcolorbox}[colframe=black,,colback=white,size=small]
\begin{proposition}\label{thm:property-of-T}
Assume $J_0\equiv 0$. The following statements hold true:
\begin{enumerate}

\item $J_k\in {\cal M}$ for all $k \in {\mathbb N}$. Especially,
$J_k \leq M /(1-\gamma)$;

\item $T$ is a map from $\cal M$ to $\cal M$;

\item $T$ is a contraction map.
\end{enumerate}
\end{proposition}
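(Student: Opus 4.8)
The plan is to prove the three items in order: the first two by a direct estimate using \cref{assumption0} and the explicit form of $T$ in~\eqref{eq:T-operator}, and the third by the classical contraction argument adapted to the present sub-stochastic setting.

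For item~(1) I would induct on $k$. The base case is trivial since $J_0\equiv 0$ lies in ${\cal M}$ and satisfies $0\le J_0\le M/(1-\gamma)$. For the inductive step, assume $J_k\in{\cal M}$ with $0\le J_k\le M/(1-\gamma)$. In the second expression for $(TJ_k)(I)$ in~\eqref{eq:T-operator}, nonnegativity is immediate because $R\ge 0$ (\cref{assumption0}), $J_k\ge 0$, and $p_{\bf x}\ge 0$, so the infimum over $u\in U$ of a family of nonnegative reals is $\ge 0$. For the upper bound, I would use $R(x(0),u)\le M$ together with $\int_{x(1)\in X}p_{\bf x}(x(1)|I,u)\,dx(1)\le 1$ — this integral is $\le 1$ rather than $=1$ precisely because of the exit event encoded by ${\mathbb I}_X$, so integrating the density over the strict subset $X$ loses mass — to obtain $(TJ_k)(I)\le M+\gamma\,\frac{M}{1-\gamma}=\frac{M}{1-\gamma}$ uniformly in $I$; finiteness follows. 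Hence $J_{k+1}=TJ_k\in{\cal M}$ with the same bound, closing the induction. Item~(2) is the same computation with an arbitrary bound: for $J\in{\cal M}$ set $C:=\sup_I J(I)<\infty$, and then $0\le (TJ)(I)\le M+\gamma C<\infty$ for all $I$, so $TJ\in{\cal M}$.

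For item~(3) I would estimate $|(TJ)(I)-(TJ')(I)|$ pointwise for $J,J'\in{\cal M}$, using two ingredients: the elementary bound $|\inf_u f(u)-\inf_u g(u)|\le\sup_u|f(u)-g(u)|$ applied to the bracketed terms in~\eqref{eq:T-operator}, which cancels the reward terms and leaves $\gamma\sup_{u\in U}|\int_{x(1)\in X}(J(I')-J'(I'))\,p_{\bf x}(x(1)|I,u)\,dx(1)|$; and then moving the absolute value inside the integral, bounding $|J(I')-J'(I')|\le d(J,J')$ uniformly, and invoking $\int_{x(1)\in X}p_{\bf x}(x(1)|I,u)\,dx(1)\le 1$ once more. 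This yields $|(TJ)(I)-(TJ')(I)|\le\gamma\,d(J,J')$ for every $I$, hence $d(TJ,TJ')\le\gamma\,d(J,J')$, and since $\gamma\in[0,1)$, $T$ is a contraction.

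I do not expect a genuine obstacle here; the only point worth care is that the sub-stochasticity $\int_X p_{\bf x}(\cdot|I,u)\,dx(1)\le 1$ — inequality allowed, due to the indicator ${\mathbb I}_X$ modelling the exit — is exactly what makes both the uniform bound in~(1) and the contraction modulus $\gamma$ in~(3) come out cleanly, and that throughout one must keep track of the tuple notation in which $I'=(x(1),x(0),\ldots,x(-L+1))$ shares all but its leading coordinate with $I=(x(0),\ldots,x(-L))$, so that $x(1)$ is the only variable being integrated.
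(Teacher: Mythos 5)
Your proof is correct and follows essentially the same route as the paper's: a geometric-series/induction bound using \cref{assumption0} and sub-stochasticity of the exit-truncated kernel for (1), boundedness propagation for (2), and the standard estimate $|\inf_u f(u)-\inf_u g(u)|\le\sup_u|f(u)-g(u)|$ for (3). If anything, your direct verification of (2) for an arbitrary $J\in{\cal M}$ is more careful than the paper's one-line deferral to item (1), which strictly speaking only covers the specific iterates $J_k$ started from $J_0\equiv 0$.
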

\end{tcolorbox}
\begin{proof}
To prove~1), note that $J_1 \le M$ by~\cref{assumption0}.
Moreover, by the definition of $T$, $J_2 \le M + \gamma M$, and
repeating it yields $J_k\le M\sum_{i=0}^{k-1} {\gamma^i}\le
M\sum_{i=0}^\infty {\gamma^i}=\frac{M}{1-\gamma}$, implying $J_{k}
\in {\cal M}$ for all $k \in {\mathbb N}$. 2) is proved directly
from 1). To prove~3), consider any $J,J' \in {\cal M}$. Then, we
have $d(TJ,TJ') = \sup_{I\in {\cal I}} |TJ(I)-TJ'(I)|\le \sup_{I \in {\cal I},u\in U} \gamma
| {\mathbb E}[J({\bf I}(1)) -J'({\bf I}(1))|{\bf I}(0)=I,{\bf
u}(0)=u] |\le \gamma d(J,J')$. Therefore, $T$ is a
contraction map.
\end{proof}
\begin{tcolorbox}[colframe=black,,colback=white,size=small]
\begin{proposition}\label{thm:monotonicity}
Assume $J_0\equiv 0$. The following statements hold true:
\begin{enumerate}
\item $T$ is monotone, i.e., if $J\geq J'$, then $TJ\geq TJ'$;

\item For any given $\pi\in \Pi$, $T_{\pi}$ is monotone;

\item $(J_k)_{k=0}^\infty$ is a monotonically non-decreasing
sequence;

\item $(J_k)_{k=0}^\infty$ uniformly converges to a unique fixed
point $J_\infty \in {\cal M}$, i.e., $T J_\infty=J_\infty$, w.r.t.
the metric $d$.
\end{enumerate}
\end{proposition}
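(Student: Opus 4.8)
The plan is to dispatch the four claims in order, letting the monotonicity statements (1)--(2) carry the weight of (3), and then closing out (4) by combining the resulting monotone structure with the contraction property already recorded in \cref{thm:property-of-T}. For (1), I would argue directly from the definition of $T$ in~\eqref{eq:T-operator}. Suppose $J\ge J'$ pointwise on ${\cal I}$. Since the indicator ${\mathbb I}_X$ takes values in $\{0,1\}$, we have ${\mathbb I}_X({\bf x}(1))J({\bf I}(1))\ge {\mathbb I}_X({\bf x}(1))J'({\bf I}(1))$ almost surely, so taking the conditional expectation in~\eqref{eq:T-operator} preserves the inequality for every fixed $u\in U$; adding the common term $R(x(0),u)$ and then taking the infimum over $u$ preserves it as well, because the bracketed expression is pointwise larger in $u$ and the infimum is monotone under such a uniform comparison. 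Hence $(TJ)(I)\ge (TJ')(I)$ for all $I$. Claim (2) is the identical computation applied to~\eqref{eq:T-pi-operator}, where the fixed action $u=\pi(I)$ replaces the infimum, so no additional work is needed.

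For (3) I would induct on $k$. The base case $J_1\ge J_0$ holds since $J_0\equiv 0$ gives $(TJ_0)(I)=\inf_{u\in U}R(x(0),u)\ge 0$ by the lower bound $R\ge 0$ in \cref{assumption0}; this is the single point in the argument where nonnegativity of the reward, rather than mere boundedness, is used. For the inductive step, if $J_k\ge J_{k-1}$ then applying the monotonicity of $T$ from part (1) yields $J_{k+1}=TJ_k\ge TJ_{k-1}=J_k$, so $(J_k)_{k=0}^\infty$ is monotonically non-decreasing.

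For (4) I would first combine (3) with the a priori bound $J_k\le M/(1-\gamma)$ established in \cref{thm:property-of-T}: for each $I$ the scalar sequence $(J_k(I))_k$ is non-decreasing and bounded above, hence converges pointwise to some limit $J_\infty(I)$ with $0\le J_\infty\le M/(1-\gamma)$, so $J_\infty\in{\cal M}$. To upgrade this to uniform convergence and to identify $J_\infty$ as the unique fixed point, I would invoke the Banach fixed point theorem: by \cref{thm:property-of-T}, $T$ is a contraction on the complete metric space $({\cal M},d)$, so it admits a unique fixed point in ${\cal M}$, and the Picard iterates $J_{k+1}=TJ_k$ starting from $J_0\equiv 0\in{\cal M}$ converge to it with respect to $d$, i.e., uniformly; this limit must coincide with the pointwise limit $J_\infty$, which therefore satisfies $TJ_\infty=J_\infty$.

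None of the steps is genuinely difficult. The two places that call for care are (i) verifying in part (1) that the infimum over $u$ really does preserve the ordering — it does precisely because $J\ge J'$ holds uniformly in $I$, so the comparison passes through both the conditional expectation and the infimum — and (ii) noticing that part (3) relies on $R\ge 0$ and not just on $R$ being bounded. Everything else is bookkeeping, and part (4) is in essence a citation of the contraction estimate already proved.
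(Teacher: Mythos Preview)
Your proposal is correct and matches the paper's proof essentially step for step: monotonicity of $T$ and $T_\pi$ argued directly from the definitions, the induction for (3) launched from $R\ge 0$, and (4) obtained by invoking the contraction property of \cref{thm:property-of-T} together with the Banach fixed point theorem. The only cosmetic difference is that in (4) you first extract a pointwise limit from monotone boundedness before appealing to Banach, whereas the paper goes straight to Banach; your extra step is harmless but unnecessary.
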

\end{tcolorbox}
\begin{proof}
To prove~1), assume $J\geq J'$ and recall the definition in~\eqref{eq:T-operator}
\begin{align*}
(TJ_0)(I)&:=\inf_{u \in U} \{R(x(0),u) + \gamma {\mathbb E}[{\mathbb I}_X ({\bf x}(1))J_0({\bf I}(1))|{\bf I}(0)=I,{\bf u}(0)=u]\},\nonumber\\
&:=\inf_{u \in U} \left\{R(x(0),u)+\gamma \int_{x(1)\in X} {J_0(I')p_{\bf x}
(x(1)|I,u)dx(1)}\right\},
\end{align*}

Then, for any $I\in {\cal I}$, we have 
\begin{align*}
&(TJ)(I)-(TJ')(I)\\
&\ge \inf_{u \in U} \left\{ R(x(0),u) + \gamma\int_{x(1)\in X} J(I')p_{\bf x} (x(1)|I,u)dx(1) - R(x(0),u)-\gamma \int_{x(1)\in X} {J'(I')p_{\bf x}(x(1)|I,u)dx(1)} \right\}\\
& = \gamma \inf_{u \in U} \left\{ \int_{x(1)\in X} {(J(I') - J'(I'))p_{\bf x} (x(1)|I,u)dx(1)} \right\}\\
&\ge 0,
\end{align*}
where the last inequality follows
from the hypothesis $J\geq J'$. This completes the proof of 1).
The statement 2) can be proved in a similar way, so omitted. The
proof of~3) is completed by an induction argument. Since
$J_0\equiv 0$ and $J_1(I)=(TJ_0)(I):=\inf_{u \in U} R(x(0),u)\ge
0$ by~\cref{assumption0}, $J_1 \geq J_0$ holds. By the
monotonicity of the operator $T$ in~1), we have $T^i J_1\ge T^i
J_0,\forall i \in {\mathbb N}_+$, meaning $J_{k+1}\ge J_k ,\forall
i \in {\mathbb N}$, which concludes the proof of~3).
By~\cref{thm:property-of-T}, $T:{\cal M} \to {\cal M}$ is a
contraction map on the complete metric space $({\cal M},d)$. By
the Banach fixed point
theorem~\cite[Theorem~5.6.1]{csuhubi2003preliminaries}, $(J_k)_{k
= 0}^\infty$ converges to a unique fixed point $J_\infty \in {\cal
M}$, i.e., $J_\infty=T J_\infty$, w.r.t. the metric $d$. The
convergence is uniform w.r.t. the metric $d$, which is proved
directly from the definition of the convergence in the metric
space as discussed in~\cite[pp.~301]{csuhubi2003preliminaries}.
\end{proof}

The last step for the proof of~\cref{thm:convergence-of-DP} is to
prove $J_\infty = J^*$, where $J_\infty:=\lim_{k\to \infty} J_k$ and the sequence $(J_k)_{k=0}^\infty$ is generated by the DP algorithm, $J_{k+1}=(TJ_k)$ with $J_0\equiv 0$. To this end, we need to prove that for any
given $\bar\pi\in \Pi^\infty$, we have $\lim_{k\to\infty} T_{\pi_0} T_{\pi_1}\cdots T_{\pi_k}J_0= J^{\bar \pi}$, which will be proved using two intermediate lemmas below.
\begin{tcolorbox}[colframe=black,,colback=white,size=small]
\begin{lemma}\label{thm:summation-form}
Assume $J_0\equiv 0$. For any given $\bar\pi\in\Pi^\infty$ and $k
\ge 1$, $T_{\pi_0} T_{\pi_1}\cdots T_{\pi_k}J_0$ is described as
\begin{align}
&T_{\pi_0} T_{\pi_1}\cdots T_{\pi_k}J_0(I)= {\mathbb E}\left[ \left. \sum_{i=0}^{\min\{\tau ({\bf
I}(0);\bar \pi),k\}} {\gamma^i {\bf r}({\bf x}(i),\pi_i({\bf
I}(i)))} \right| {\bf I}(0)=I \right]\label{eq:eq5}
\end{align}
for any $I \in {\cal I}$.
\end{lemma}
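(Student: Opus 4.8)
The plan is to establish \eqref{eq:eq5} by induction on $k\ge 1$, at each stage peeling off the leftmost operator $T_{\pi_0}$ through its definition \eqref{eq:T-pi-operator} and using the time-homogeneity of the information-state transition $p_{\bf I}$ (\cref{assump:Markov-property}) to re-index the continuation of the trajectory. For the base case $k=1$: since $J_0\equiv 0$, \eqref{eq:T-pi-operator} shows that $(T_{\pi_1}J_0)(I')$ equals $R$ evaluated at the first coordinate of $I'$ together with the action $\pi_1(I')$, for every $I'\in{\cal I}$; substituting this into $T_{\pi_0}$ and using $R(x,u)={\mathbb E}[{\bf r}(x,u)]$ yields
\[
T_{\pi_0}T_{\pi_1}J_0(I)=R(x(0),\pi_0(I))+\gamma{\mathbb E}\big[{\mathbb I}_X({\bf x}(1))\,{\bf r}({\bf x}(1),\pi_1({\bf I}(1)))\,\big|\,{\bf I}(0)=I,\ {\bf u}(0)=\pi_0(I)\big].
\]
The indicator ${\mathbb I}_X({\bf x}(1))$ equals $1$ precisely when $\tau\ge 1$, so the $i=1$ term is retained, and $0$ when $\tau=0$, i.e.\ ${\bf x}(1)\notin X$, so the sum stops at $i=0$; hence the right-hand side equals ${\mathbb E}[\sum_{i=0}^{\min\{\tau,1\}}\gamma^i{\bf r}({\bf x}(i),\pi_i({\bf I}(i)))\mid{\bf I}(0)=I]$, which is \eqref{eq:eq5} for $k=1$.

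For the inductive step, assume \eqref{eq:eq5} holds for $k$. The product $T_{\pi_1}\cdots T_{\pi_{k+1}}$ has exactly the form appearing on the left of \eqref{eq:eq5} with the policy indices shifted by one, so the hypothesis gives, for every $I'\in{\cal I}$,
\[
(T_{\pi_1}\cdots T_{\pi_{k+1}}J_0)(I')={\mathbb E}\big[\textstyle\sum_{i=0}^{\min\{\tau',k\}}\gamma^i{\bf r}({\bf x}(i+1),\pi_{i+1}({\bf I}(i+1)))\,\big|\,{\bf I}(1)=I'\big],
\]
where $\tau'$ is the stopping time of the trajectory launched from ${\bf I}(1)=I'$; by time-homogeneity this trajectory has the same law as the continuation of the original one, and on the event $\{{\bf x}(1)\in X\}$ one has $\tau'=\tau-1$. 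Re-indexing with $j=i+1$ and using $\min\{\tau-1,k\}+1=\min\{\tau,k+1\}$ rewrites the bracket as $\sum_{j=1}^{\min\{\tau,k+1\}}\gamma^{j-1}{\bf r}({\bf x}(j),\pi_j({\bf I}(j)))$. Substituting this into \eqref{eq:T-pi-operator} for $T_{\pi_0}$, collapsing the nested conditional expectations by the tower property, noting that the factor ${\mathbb I}_X({\bf x}(1))$ changes nothing because the inner sum is already empty on $\{{\bf x}(1)\notin X\}$ (where $\tau=0$), absorbing the leading $\gamma$ into the sum, and recognizing $R(x(0),\pi_0(I))$ as the $j=0$ term (which is always present because $\tau\ge 0$ for $I\in X^{L+1}$), one obtains ${\mathbb E}[\sum_{j=0}^{\min\{\tau,k+1\}}\gamma^j{\bf r}({\bf x}(j),\pi_j({\bf I}(j)))\mid{\bf I}(0)=I]$, which is \eqref{eq:eq5} for $k+1$. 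This closes the induction.

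I expect the main obstacle to be the careful bookkeeping around the stopping time and the exit indicator: one must verify that $\tau'=\tau-1$ on $\{{\bf x}(1)\in X\}$ while $\tau=0$ on $\{{\bf x}(1)\notin X\}$, that the truncation limit $\min\{\tau,k+1\}$ emerges correctly from $\min\{\tau-1,k\}+1$, and that the factor ${\mathbb I}_X({\bf x}(1))$ neither inserts nor removes terms --- in particular, that the ``already exited'' case is consistent with an empty inner sum. The time-homogeneity argument that licenses applying the induction hypothesis with ${\bf I}(1)$ in place of ${\bf I}(0)$, though intuitively clear from \cref{assump:Markov-property}, should likewise be spelled out with care.
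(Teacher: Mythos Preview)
Your proposal is correct and follows essentially the same route as the paper: induction that peels off $T_{\pi_0}$, applies the hypothesis to the shifted product $T_{\pi_1}\cdots T_{\pi_{k+1}}$, and reconciles the stopping time via $\tau'=\tau-1$ on $\{{\bf x}(1)\in X\}$. The only cosmetic differences are that the paper takes the base case at $k=0$ (a single operator) rather than $k=1$, and handles the exit event by explicitly conditioning on $\{\tau=0\}$ versus $\{\tau\ge 1\}$ instead of arguing directly through the indicator as you do; neither difference is substantive.
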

\end{tcolorbox}
\begin{proof}
The claim will be proved by an induction argument. Let $k = 0$.
Since $J_0 \equiv 0$, by the definition
in~\eqref{eq:T-pi-operator}, $T_{\pi_0}J_0(I)$ is given by
\begin{align*}
&T_{\pi_0}J_0(I)= {\mathbb E}[ {\bf r} ({\bf x}(0),\pi_0({\bf
I}(0)))|{\bf I}(0)=I]= {\mathbb E}\left[ \left. \sum_{i=0}^{\min \{ \tau ({\bf
I}(0);\bar\pi),0\}} \gamma^i {\bf r}({\bf x}(i),\pi_i({\bf
I}(i))) \right| {\bf I}(0)=I \right],
\end{align*}
where we use the fact that $\tau ({\bf
I}(0);\bar\pi) \geq 0$ as $I \in {\cal I}$ and $x(0) \in X$.

Now, for an induction argument, suppose for $k \ge 1$
\begin{align*}
&T_{\pi_0} T_{\pi_1}\cdots T_{\pi_{k-1}}J_0(I)= {\mathbb E}\left[ \left. \sum_{i=0}^{\min \{\tau ({\bf
I}(0);\bar\pi),k-1\}} {\gamma^i {\bf r}({\bf x}(i),\pi_i({\bf
I}(i)))} \right| {\bf I}(0)=I \right]
\end{align*}
holds. Then, shifting the time index of the control policy by one in the
above equation yields
\begin{align}
&T_{\pi_1} T_{\pi_2}\cdots T_{\pi_k} J_0(I)= {\mathbb E}\left[ \left. \sum_{i=0}^{\min \{\tau ({\bf
I}(0);\bar \pi_{1:\infty}),k-1\}} {\gamma^i {\bf r}({\bf
x}(i),\pi_{i+1}({\bf I}(i))} \right| {\bf I}(0)= I
\right],\label{eq:eq2}
\end{align}
where $\bar \pi_{1:\infty}:=(\pi_1,\pi_2,\ldots)$. We apply the
operator $T_{\pi_0}$ to~\eqref{eq:eq2} to have
\begin{align}
T_{\pi_0} T_{\pi_1} \cdots T_{\pi_k} J_0(I)&= {\mathbb E}\left[ \left.
{\bf r}(x(0),\pi_0(I)) + \gamma {\mathbb E}[{\mathbb I}_X({\bf x}(1)) T_{\pi_1} \cdots T_{\pi_k}J_0({\bf I}(1))] \right| {\bf I}(0)=I \right]\nonumber\\
&= {\mathbb E}\left[ {\bf r}(x(0),\pi_0(I)) + {\mathbb I}_X({\bf x}(1)) \left.  \times \sum_{i=0}^{\min\{\tau({\bf I}(1);\bar \pi_{1:\infty}),k-1\}} {\gamma^{i+1} {\bf r}({\bf x}(i+1),\pi_{i+1}({\bf
I}(i+1)))} \right| {\bf I}(0)=I\right],\label{eq:eq3}
\end{align}
where the second equation is obtained by~\eqref{eq:eq2} and $\tau(
{\bf I}(1);\bar\pi_{1:\infty})\geq -1$ is the first time instant
the trajectory ${\bf x}(k)$ starting from ${\bf x}(1)$ exits $X$
given ${\bf I}(1)$ and $\bar \pi_{1:\infty}$.

Note that ${\bf x}(1)$ is a random variable and $\tau({\bf I}(1) ;\bar
\pi_{1:\infty})=-1$ when ${\bf x}(1)\notin X$. In this case, we define $\sum_{i=1}^{-1}{\cdot} =0$. By conditioning on the stopping time $\tau({\bf I}(0);\bar\pi)$, the expectation
in~\eqref{eq:eq3} is expressed as
\begin{align}
&{\mathbb E}[ \left. {\bf r}(x(0),\pi_0(I)) \right| \tau
(I;\bar\pi)=0] {\mathbb P}[\tau (I;\bar\pi)=0]\nonumber\\
&+ {\mathbb E}\left[{\bf r}(x(0),\pi_0(I)) + \sum_{i=0}^{\min \{
\tau ({\bf I}(1);\bar \pi_{1:\infty}),k-1\}} {\gamma^{i+1} {\bf r}({\bf x}(i+1),\pi_{i+1}({\bf I}(i+1)))} \left| {\bf I}(0)=I,\tau(I;\bar\pi)\ge 1 \right. \right]{\mathbb P}[\tau(I;\bar\pi)\ge 1].\label{eq:eq4}
\end{align}
In the second expectation, $\tau(I;\bar\pi)\ge 1$ implies $\tau
({\bf I}(1);\bar \pi_{1:\infty}) \ge 0$. Noting that ${\mathbb
P}[\tau ({\bf I}(0);\bar \pi)=j]={\mathbb P}[\tau({\bf I}(1);\bar
\pi_{1:\infty})=j-1]$ for all $j\ge 1$, the quantity $\min\{\tau
({\bf I}(1);\bar\pi_{1:\infty}),k-1\}$ can be rewritten as
\begin{align*}
&\min\{\tau({\bf I}(1),\bar\pi_{1:\infty}),k-1 \}=\min\{\tau({\bf
I}(0),\bar \pi)-1,k-1\}.
\end{align*}
Plugging it into the original formulation in~\eqref{eq:eq4}
results in 
\begin{align*}
&{\mathbb E}[ \left. {\bf r}(x(0),\pi_0(I)) \right| \tau
(I;\bar\pi)=0 ] {\mathbb P}[\tau (I;\bar\pi)=0]\\
&+ {\mathbb E}\left[\left. {\bf r}(x(0),\pi_0(I)) + \sum_{i=0}^{\min \{
\tau ({\bf I}(0);\bar \pi_{1:\infty})-1,k-1 \}} {\gamma^{i+1} {\bf r}({\bf x}(i+1),\pi_{i+1}({\bf I}(i+1)))} \right| {\bf I}(0)=I,\tau(I;\bar\pi)\ge 1 \right]{\mathbb P}[\tau(I;\bar\pi)\ge 1]\\
&={\mathbb E}[ \left. {\bf r}(x(0),\pi_0(I)) \right| \tau
(I;\bar\pi)=0 ] {\mathbb P}[\tau (I;\bar\pi)=0]\\
&+ {\mathbb E}\left[\left. {\bf r}(x(0),\pi_0(I)) + \sum_{i=1}^{\min \{
\tau ({\bf I}(0);\bar \pi_{1:\infty}),k\}} {\gamma^{i} {\bf r}({\bf x}(i),\pi_{i}({\bf I}(i)))} \right| {\bf I}(0)=I,\tau(I;\bar\pi)\ge 1 \right]{\mathbb P}[\tau(I;\bar\pi)\ge 1]\\
&={\mathbb E}\left[ \left. \sum_{i=0}^{\min \{\tau ({\bf I}(0);\bar \pi_{0:\infty}),k\}} {\gamma^{i} {\bf r}({\bf x}(i),\pi_{i}({\bf I}(i)))} \right| {\bf I}(0)=I \right],
\end{align*}
which is the desired result.
\end{proof}
\begin{tcolorbox}[colframe=black,,colback=white,size=small]
\begin{lemma}\label{thm:operator-convergence}
Assume $J_0\equiv 0$. For any given $\bar\pi\in\Pi^\infty$, we
have $\lim_{k \to\infty} T_{\pi_0}T_{\pi_1}\cdots T_{\pi_k} J_0 =
J^{\bar\pi}$.
\end{lemma}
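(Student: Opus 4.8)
The plan is to read the claim straight off the closed form of the iterates in \cref{thm:summation-form} together with the definition~\eqref{J-alpha} of $J^{\bar\pi}$, and to control the gap between the two by a geometric tail. Since $J_0\equiv 0$, \cref{thm:summation-form} expresses the $k$-step iterate (abbreviating $\tau:=\tau({\bf I}(0);\bar\pi)$) as
\begin{align*}
(T_{\pi_0}T_{\pi_1}\cdots T_{\pi_k}J_0)(I)={\mathbb E}\left[\left.\sum_{i=0}^{\min\{\tau,k\}}\gamma^i{\bf r}({\bf x}(i),\pi_i({\bf I}(i)))\right|{\bf I}(0)=I\right],
\end{align*}
and $J^{\bar\pi}(I)$ is the same conditional expectation but with upper summation limit $\tau$ rather than $\min\{\tau,k\}$. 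So the first step is simply to subtract and conclude that the two differ only through the discarded tail,
\begin{align*}
J^{\bar\pi}(I)-(T_{\pi_0}T_{\pi_1}\cdots T_{\pi_k}J_0)(I)={\mathbb E}\left[\left.\sum_{i=k+1}^{\infty}\gamma^i{\bf r}({\bf x}(i),\pi_i({\bf I}(i))){\mathbb I}_{\{\tau\ge i\}}\right|{\bf I}(0)=I\right],
\end{align*}
where I rewrite $\sum_{i=\min\{\tau,k\}+1}^{\tau}(\cdot)=\sum_{i=k+1}^{\infty}(\cdot){\mathbb I}_{\{\tau\ge i\}}$ with the convention that an empty sum is $0$. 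On each sample path this tail is identically zero once $k\ge\tau$, which is the intuitive reason the limit holds.

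The second step is to bound this tail uniformly in $I$. I would note that for $i\ge 1$ the event $\{\tau\ge i\}=\{{\bf x}(1),\ldots,{\bf x}(i)\in X\}$ is determined by the information available up to time $i$; conditioning the $i$-th summand on that information and using the tower property together with \cref{assumption0},
\begin{align*}
{\mathbb E}\left[\left.{\bf r}({\bf x}(i),\pi_i({\bf I}(i))){\mathbb I}_{\{\tau\ge i\}}\right|{\bf I}(0)=I\right]
&={\mathbb E}\left[\left.R({\bf x}(i),\pi_i({\bf I}(i))){\mathbb I}_{\{\tau\ge i\}}\right|{\bf I}(0)=I\right]\\
&\le M\,{\mathbb P}[\tau\ge i\mid{\bf I}(0)=I]\le M.
\end{align*}
In particular every summand has nonnegative conditional mean, so monotone convergence would let me interchange ${\mathbb E}$ with $\sum_i$, yielding
\begin{align*}
0\le J^{\bar\pi}(I)-(T_{\pi_0}T_{\pi_1}\cdots T_{\pi_k}J_0)(I)\le\sum_{i=k+1}^{\infty}\gamma^i M=\frac{\gamma^{k+1}M}{1-\gamma};
\end{align*}
the same computation without the truncation reproduces $J^{\bar\pi}\le M/(1-\gamma)$, consistent with \cref{thm:bound-on-J*}.

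The last step is immediate: the bound $\gamma^{k+1}M/(1-\gamma)$ is independent of $I$ and tends to $0$ as $k\to\infty$, so $d(T_{\pi_0}T_{\pi_1}\cdots T_{\pi_k}J_0,J^{\bar\pi})\le\gamma^{k+1}M/(1-\gamma)\to 0$, giving convergence uniformly w.r.t.\ the metric $d$ (and a fortiori pointwise), which is the assertion. I expect the only genuinely delicate point to be the bookkeeping around the random stopping time $\tau$: keeping the empty-sum convention consistent through the truncation, verifying that $\{\tau\ge i\}$ is measurable w.r.t.\ the observations up to time $i$ so that the reward noise can be integrated out against it, and thereby reducing the estimate to the bounded \emph{expected} reward $R$ of \cref{assumption0}. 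Once those are handled, the geometric tail bound, and hence the uniform convergence, are routine.
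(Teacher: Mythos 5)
Your proof is correct, and it reaches the conclusion by a genuinely different final step than the paper. Both arguments rest on the closed form of \cref{thm:summation-form}. The paper's route is qualitative: it sets $J_k^{\bar\pi}:=T_{\pi_0}\cdots T_{\pi_k}J_0$, observes that this sequence is non-decreasing in $k$ and bounded by $M/(1-\gamma)$ (via the argument of \cref{thm:property-of-T}), so a pointwise limit exists, and then identifies that limit with $J^{\bar\pi}$ by passing the limit inside the expectation with the dominated convergence theorem. You instead subtract the truncated sum from the full one, integrate out the reward noise against the stopping-time indicator (legitimate, since $\{\tau\ge i\}$ is determined by ${\bf x}(0),\ldots,{\bf x}(i)$ and hence by the information available when ${\bf r}({\bf x}(i),\cdot)$ is drawn), and bound the discarded tail by $\sum_{i>k}\gamma^i M=\gamma^{k+1}M/(1-\gamma)$. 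This buys you something the paper's proof does not state: an explicit geometric rate that is uniform in $I$, i.e., $d(J_k^{\bar\pi},J^{\bar\pi})\le\gamma^{k+1}M/(1-\gamma)$, whereas the dominated convergence argument only delivers pointwise convergence for each fixed $I$. The one point to keep honest in your version is the interchange of ${\mathbb E}$ with the infinite tail sum: the raw summands $\gamma^i{\bf r}({\bf x}(i),\pi_i({\bf I}(i))){\mathbb I}_{\{\tau\ge i\}}$ need not be nonnegative pathwise unless ${\bf r}\ge 0$, so you should first condition to replace ${\bf r}$ by $R\in[0,M]$ (as you do) and only then invoke Tonelli or monotone convergence; with that ordering the argument is airtight, and it is no less rigorous than the paper's own appeal to dominated convergence, which likewise implicitly relies on boundedness of the expected rather than the realized reward.
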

\end{tcolorbox}
\begin{proof}
Define $T_{\pi_0}T_{\pi_1}\cdots T_{\pi_k} J_0=:J_k^{\bar\pi}$.
Using~\cref{thm:summation-form} and following the proof
of~\cref{thm:property-of-T}, it is easy to prove that $J_k^{\bar
\pi }$ is bounded on $\cal I$, i.e., $J_k^{\bar\pi}\leq
M/(1-\gamma)$. Moreover, from the definition, $J_k^{\bar \pi }$ is
non-decreasing in $k$. Therefore, the point-wise limit $\lim_{k
\to\infty}J_k^{\bar\pi}=:J_\infty^{\bar\pi}$ exists. Noting that
the function inside the expectation operator is bounded, we apply
the dominated convergence theorem~\cite[Theorem~1.5.6]{durrett2010probability} to have 
\begin{align*}
J_\infty^{\bar\pi}(I)&= \lim_{k \to\infty}J_k^{\bar\pi}(I)=\lim_{k \to\infty} T_{\pi_0}T_{\pi_1}\cdots T_{\pi_k} J_0(I)\\
&=\lim_{k \to\infty} {\mathbb E}\left[ \left. \sum_{i=0}^{\min\{\tau ({\bf
I}(0);\bar \pi),k\}} {\gamma^i {\bf r}({\bf x}(i),\pi_i({\bf
I}(i)))} \right| {\bf I}(0)=I \right]\\
& = {\mathbb E}\left[ \left. \lim_{k \to\infty} \sum_{i=0}^{\min\{\tau ({\bf
I}(0);\bar \pi),k\}} {\gamma^i {\bf r}({\bf x}(i),\pi_i({\bf
I}(i)))} \right| {\bf I}(0)=I \right]\\
& = J^{\bar\pi}(I)
\end{align*}
for all $I \in {\cal I}$, where the third equality is due to the dominated convergence theorem and the last equality is from the definition of $J^{\bar\pi}$. Therefore, $J^{\bar\pi} = J_\infty^{\bar\pi}$, and the desired result is obtained.
\end{proof}

Now, we are in position to prove that $J_\infty = J^*$ holds with
$J_0\equiv 0$.
\begin{tcolorbox}[colframe=black,,colback=white,size=small]
\begin{proposition}\label{thm:optimality}
Assume $J_0\equiv 0$. Then, $J_\infty = J^*$.
\end{proposition}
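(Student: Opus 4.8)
The plan is to prove the two inequalities $J_\infty \le J^*$ and $J_\infty \ge J^*$ separately, using the convergence of the policy operators from \cref{thm:operator-convergence} together with the monotonicity and contraction properties of $T$ and $T_\pi$ from \cref{thm:monotonicity} and \cref{thm:property-of-T}. Recall that \cref{thm:monotonicity} already gives that the pointwise limit $J_\infty=\lim_{k\to\infty}J_k$ exists in ${\cal M}$ and satisfies $TJ_\infty=J_\infty$.

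First I would establish the upper bound $J_\infty \le J^{\bar\pi}$ for every $\bar\pi=(\pi_0,\pi_1,\ldots)\in\Pi^\infty$. The key observation is that $TJ \le T_\pi J$ pointwise for any $J\in{\cal M}$ and any $\pi\in\Pi$, since the right-hand side of \eqref{eq:T-pi-operator} is one of the terms over which \eqref{eq:T-operator} takes the infimum. Combining this with the monotonicity of $T$ and of each $T_\pi$, an induction on $k$ yields $T^{k+1}J_0 \le T_{\pi_0}T_{\pi_1}\cdots T_{\pi_k}J_0$ for all $k$: indeed $T^{k+1}J_0 = T(T^k J_0)\le T_{\pi_0}(T^k J_0)\le T_{\pi_0}(T_{\pi_1}\cdots T_{\pi_k}J_0)$ using the inductive bound on $T^kJ_0$ and monotonicity of $T_{\pi_0}$. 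Letting $k\to\infty$ and invoking \cref{thm:operator-convergence} gives $J_\infty \le J^{\bar\pi}$. Taking $\bar\pi$ to be the constant sequence associated with an arbitrary stationary $\pi\in\Pi$, we get $J_\infty \le J^\pi$, hence $J_\infty(I)\le \inf_{\pi\in\Pi}J^\pi(I)=J^*(I)$ for all $I\in{\cal I}$.

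For the reverse inequality I would use the fixed-point identity $J_\infty=TJ_\infty$. Fix $\epsilon>0$. By definition of the infimum in \eqref{eq:T-operator}, for each $I\in{\cal I}$ there is a control $\pi_\epsilon(I)\in U$ with $R(x(0),\pi_\epsilon(I))+\gamma\,{\mathbb E}[{\mathbb I}_X({\bf x}(1))J_\infty({\bf I}(1))\,|\,{\bf I}(0)=I,{\bf u}(0)=\pi_\epsilon(I)]\le J_\infty(I)+\epsilon$, which defines a stationary policy $\pi_\epsilon\in\Pi$ with $T_{\pi_\epsilon}J_\infty\le J_\infty+\epsilon$ on ${\cal I}$. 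Using $T_{\pi_\epsilon}(J+c)\le T_{\pi_\epsilon}J+\gamma c$ for any constant $c\ge 0$ together with the monotonicity of $T_{\pi_\epsilon}$, an induction gives $T_{\pi_\epsilon}^{\,n}J_\infty\le J_\infty+\epsilon\sum_{i=0}^{n-1}\gamma^i\le J_\infty+\epsilon/(1-\gamma)$ for all $n$. Since $T_{\pi_\epsilon}$ is a contraction on $({\cal M},d)$ (the argument of \cref{thm:property-of-T} applies verbatim) and $J^{\pi_\epsilon}=T_{\pi_\epsilon}J^{\pi_\epsilon}$ by \cref{thm:Bellman-eq}, we have $T_{\pi_\epsilon}^{\,n}J_\infty\to J^{\pi_\epsilon}$ uniformly, so $J^{\pi_\epsilon}\le J_\infty+\epsilon/(1-\gamma)$. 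Therefore $J^*(I)\le J^{\pi_\epsilon}(I)\le J_\infty(I)+\epsilon/(1-\gamma)$, and letting $\epsilon\downarrow 0$ gives $J^*\le J_\infty$. Combined with the first part, $J_\infty=J^*$.

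I expect the main obstacle to be the reverse inequality when the infimum defining $T$ is not attained: one must turn a pointwise $\epsilon$-minimizer into a policy and control the accumulation of the per-step errors along the trajectory, which is precisely what the geometric bound $T_{\pi_\epsilon}^{\,n}J_\infty\le J_\infty+\epsilon/(1-\gamma)$ accomplishes. (If $U$ is finite or compact the infimum is attained, $\pi_\epsilon$ may be chosen with $\epsilon=0$, and one directly obtains $J_\infty=T_{\pi_\epsilon}J_\infty=J^{\pi_\epsilon}\ge J^*$, collapsing this step.) A secondary technical point is measurability of the selection $I\mapsto\pi_\epsilon(I)$, which I would handle by taking $\Pi$ to be closed under $\epsilon$-optimal selections or by invoking a standard measurable selection theorem, consistent with the paper's standing conventions on $\Pi$.
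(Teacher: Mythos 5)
Your proof is correct, and while the overall decomposition (two inequalities) and the first half coincide with the paper's argument, the second half is genuinely different. For $J_\infty \le J^{\bar\pi}$ you and the paper do essentially the same thing: $T \le T_\pi$ pointwise plus monotonicity gives $T^{k+1}J_0 \le T_{\pi_0}\cdots T_{\pi_k}J_0$, and \cref{thm:operator-convergence} identifies the limit of the right-hand side as $J^{\bar\pi}$. For the reverse inequality, the paper defines a greedy policy $\pi_\infty$ attaining the infimum in $TJ_\infty$ (so that $T_{\pi_\infty}J_\infty = TJ_\infty = J_\infty$) and then bounds $J^{\pi_\infty} = \lim_k T_{\pi_\infty}^k J_0 \le \lim_k T_{\pi_\infty}^k J_\infty = J_\infty$ using $J_0 \le J_\infty$ and monotonicity of $T_{\pi_\infty}$; this silently assumes the infimum over $U$ is attained (the paper only guarantees attainment earlier for finite or compact $U$). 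You instead work with an $\epsilon$-optimal selection $\pi_\epsilon$, propagate the one-step error geometrically via $T_{\pi_\epsilon}(J+c)\le T_{\pi_\epsilon}J+\gamma c$ to get $T_{\pi_\epsilon}^n J_\infty \le J_\infty + \epsilon/(1-\gamma)$, and identify $\lim_n T_{\pi_\epsilon}^n J_\infty = J^{\pi_\epsilon}$ through the contraction property and the Bellman equation (\cref{thm:Bellman-eq}) rather than through \cref{thm:operator-convergence}. Your route is more general — it covers non-compact $U$ where no exact minimizer exists — at the cost of the extra $\epsilon$-bookkeeping and the measurable-selection caveat you correctly flag (the paper's $\Pi$ of ``admissible'' policies would indeed need to contain such selections). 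A minor remark: your first inequality restricts to stationary $\bar\pi$ when passing to $J^* = \inf_{\pi\in\Pi}J^\pi$, which matches the paper's definition of $J^*$ in Problem 1; the paper's version of the argument additionally yields $J_\infty \le J^{\bar\pi}$ for all nonstationary sequences and thereby shows the stationary greedy policy is optimal within the larger class, a byproduct your stationary-only conclusion does not state but could with no extra work.
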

\end{tcolorbox}
\begin{proof}
We follow the proof of~\cite[Prop.~2.1]{bertsekas1996neuro}.
Define $\pi_\infty$ as
\begin{align*}
&\pi_\infty(I):= \inf_{u \in U} \{ R(x(k),u) +\gamma {\mathbb
E}[J_\infty ({\bf I}(1))|{\bf I}(0)=I,{\bf u}(0)=u] \}.
\end{align*}
The above quantity is well defined because $J_\infty\in {\cal M}$
by 4) of~\cref{thm:monotonicity} and~\cref{assumption0}. From the
definition~\eqref{eq:T-pi-operator}, it also implies
$T_{\pi_\infty} J_0 = T J_\infty = J_\infty$. Then, since $J_0 \le
J_\infty$ by the monotonicity in~\cref{thm:monotonicity}, we have
$J^{\pi_\infty}= \lim_{k \to \infty} T_{\pi_\infty}^k J_0 \le
\lim_{k\to \infty}T_{\pi_\infty}^k J_\infty =J_\infty$, implying
$J^{\pi_\infty}\leq J_\infty$,
where~\cref{thm:operator-convergence} is used in the first
equality, the last equality follows from $T_{\pi_\infty}J_\infty =
J_\infty$, and the inequality comes from $J_0\le J_\infty$ and the
monotonicity of~$T_{\pi_\infty}$ in~\cref{thm:monotonicity}. On
the other hand, by the definition and the monotonicity of $T$, we
have that for any policy $\bar\pi=(\pi_0,\pi_1,\ldots)$, $J_\infty
= \lim_{k\to\infty} T^k J_0 \le\lim_{k\to\infty} T_{\pi_0} T_{\pi
_1}\cdots T_{\pi_k} J_0 = J^{\bar \pi}$, meaning $J_\infty \leq
J^{\bar\pi}$ for all $\bar\pi\in\Pi^\infty$, where the last
equality follows from~\cref{thm:operator-convergence}. Combining
both inequalities results in $J^{\pi_\infty}\le J_\infty\le
J^{\bar\pi},\forall\bar\pi\in\Pi^\infty$, yielding $J^{\pi_\infty}
= J_\infty=J^*$.
\end{proof}
Now,~\cref{thm:convergence-of-DP} is directly proved by
using~\cref{thm:monotonicity} and~\cref{thm:optimality}.

{\em Proof of~\cref{thm:convergence-of-DP}}: By (4) of~\cref{thm:monotonicity}, $(J_k)_{k=0}^\infty$ uniformly converges to a unique fixed point $J_\infty \in {\cal M}$, i.e., $T J_\infty=J_\infty$, w.r.t. the metric $d$. By~\cref{thm:optimality}, $J_\infty = J^*$, and hence, $(J_k)_{k=0}^\infty$ uniformly converges to $J^*$ w.r.t. the metric $d$. This completes the proof.

\subsection{Proof of~\cref{thm:Q-error-bound}}\label{appendix:proof2}
To prove~\cref{thm:Q-error-bound}, we first introduce the
following basic relation lemma.
\begin{tcolorbox}[colframe=black,,colback=white,size=small]
\begin{lemma}[Basic relation]\label{lemma:basic-relation}
Let $({\bf Q}_{k})_{k=0}^\infty$ be the sequence generated by the
DP 
\begin{align*}
&{\bf Q}_{k+1}={\bf F}^{(k)} {\bf Q}_k,\quad {\bf Q}_0 \equiv 0, 
\end{align*}
where ${\bf F}^{(k)}$ is defined in~\eqref{eq:F-operator-non-Markov}. Let $\hat Q^*$ be the optimal
solution of $\hat Q^*=\hat F \hat Q^*$, where $\hat F$ is defined
in~\eqref{eq:app2:eq1}. Then, we have
\begin{align*}
&\sup_{u \in U,I \in X^{L+1},b \in \Delta_{|S|}} |{\bf Q}_{k+1}(I,u)- \hat Q^* ((x(0),\beta(I,b)),u)|\\
&\le\frac{\gamma l_L^* M|S|^2}{1-\gamma}+ \gamma\sup_{u \in U, I\in X^{L+1},b \in
\Delta_{|S|}} |{\bf Q}_k (I,u) - Q^* ((x(0),\beta (I,b)),u)|
\end{align*}
for all $k \geq 0$ with probability one, where $x(0) \in X$ is the
first element of $I$.
\end{lemma}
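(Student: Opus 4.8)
The plan is to bound the one-step discrepancy between the stochastic iterate ${\bf Q}_{k+1} = {\bf F}^{(k)}{\bf Q}_k$ and the shifted target $\hat Q^*$ by inserting the correct operator $\hat F$ as an intermediate term and using the triangle inequality. Concretely, I would write, for fixed $(I,u,b)$ with $\hat I = (x(0),\beta(I,b))$,
\begin{align*}
&|{\bf Q}_{k+1}(I,u) - \hat Q^*(\hat I,u)|\\
&= |({\bf F}^{(k)}{\bf Q}_k)(I,u) - (\hat F \hat Q^*)(\hat I,u)|\\
&\le |({\bf F}^{(k)}{\bf Q}_k)(I,u) - (\hat F \hat Q_k^{\mathrm{sh}})(\hat I,u)| + |(\hat F \hat Q_k^{\mathrm{sh}})(\hat I,u) - (\hat F \hat Q^*)(\hat I,u)|,
\end{align*}
where $\hat Q_k^{\mathrm{sh}}$ denotes the function on $\hat{\cal I}\times U$ obtained by pushing ${\bf Q}_k$ through the same shift $(I',b) \mapsto (x'(0),\beta(I',b))$. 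The second term is handled by the contraction property of $\hat F$ established via \cref{thm:convergence-of-DP} (applied to the belief-state MDP): it is at most $\gamma \sup |{\bf Q}_k(I,u) - \hat Q^*((x(0),\beta(I,b)),u)|$, which is exactly the recursive term in the claimed bound. The reward term $R(x(0),u)$ cancels in both operators since it does not depend on the belief, so it plays no role.

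The first term is where the Lipschitz constant $l_L^*$ enters. Both $({\bf F}^{(k)}{\bf Q}_k)(I,u)$ and $(\hat F \hat Q_k^{\mathrm{sh}})(\hat I,u)$ are integrals over $x' \in X$ of $\inf_{u'} Q_k(I',u')$ (equivalently $\inf_{u'}\hat Q_k^{\mathrm{sh}}(\hat I',u')$, which is the same value) against a transition kernel of the form $\sum_{s\in S}[\mu]_s\, p_{\bf x}(x'|x(0),s,u)$; the only difference is the mixing vector. For ${\bf F}^{(k)}$ the mixing vector is ${\bf b}(k)$, the belief at time $k$, whereas for $\hat F$ applied at the shifted state $\hat I = (x(0),\beta(I,b))$ it is $\beta(I,b) = P(x(0))^T\cdots P(x(-L))^T b$, i.e., the belief at time $0$ propagated from $b$ at time $-L$. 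By \cref{assump:belief-state-sequence}, ${\bf b}(k)$ is itself the propagation of some belief at time $k-L$ through $P({\bf x}(k-1)),\ldots,P({\bf x}(k-L))$ along the realized trajectory, and the corresponding information window at time $k$ is precisely $I$; hence ${\bf b}(k) = \beta(I,\tilde b)$ for the realized belief $\tilde b$ at time $k-L$. The difference of the two mixing vectors is therefore $\beta(I,\tilde b) - \beta(I,b)$, which by \eqref{eq:Lipschitz-const} has $\|\cdot\|_\infty \le l_L^*\|\tilde b - b\|_\infty \le l_L^*$ (since both lie in $\Delta_{|S|}$, the diameter in $\|\cdot\|_\infty$ is at most $1$). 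Bounding $\inf_{u'}Q_k \le M/(1-\gamma)$ by \cref{thm:property-of-T}(1), integrating the density difference over $x'\in X$ (which contributes a factor at most $|S|$ from summing $\sum_s p_{\bf x}(x'|\cdot,s,u)dx' \le |S|$ after the $\ell_\infty$ bound on the coefficient difference), and multiplying by $\gamma$ yields the constant term $\dfrac{\gamma l_L^* M |S|^2}{1-\gamma}$.

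The main obstacle I anticipate is the bookkeeping in the previous paragraph: establishing cleanly that the belief vector ${\bf b}(k)$ driving ${\bf F}^{(k)}$ really does equal $\beta(I,\cdot)$ for the realized information window $I$ at time $k$, so that the comparison is between $\beta(I,\cdot)$ evaluated at two different time-$(k-L)$ beliefs and the Lipschitz hypothesis \eqref{eq:Lipschitz-const} applies verbatim. This requires matching the time indexing in \cref{assump:belief-state-sequence} and \cref{def:finite-momory-information} with the notational convention that $I = (x(0),x(-1),\ldots,x(-L))$ represents a generic window — i.e., a careful relabeling argument. Once that identification is made, the estimate of the density-difference integral (getting the $|S|^2$ factor right: one $|S|$ from the number of terms in the mixing sum, one from the worst-case $\ell_\infty$-to-$\ell_1$ passage over $S$) is routine, and the recursion closes by taking $\sup$ over $(I,u,b)$ on both sides. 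The "with probability one" qualifier is inherited directly: the bound holds for every realization of the trajectory and the associated belief sequence, because at each step the $\ell_\infty$-diameter bound $\|\tilde b - b\|_\infty \le 1$ on $\Delta_{|S|}$ is deterministic.
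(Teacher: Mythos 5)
Your proposal is correct and follows essentially the same route as the paper: the intermediate term $\hat F\hat Q_k^{\mathrm{sh}}$ in your triangle inequality is exactly the paper's add-and-subtract cross term $\gamma\int\inf_{u'}{\bf Q}_k(I',u')\sum_s[\beta(I,b)]_s\,p_{\bf x}(x'|x(0),s,u)\,dx'$, the identification ${\bf b}(k)=\beta(I,{\bf b}(k-L))$ you flag as the main bookkeeping step is precisely what the paper uses to invoke \eqref{eq:Lipschitz-const}, and your accounting of the $|S|^2$ factor and the bound $\|\tilde b-b\|_\infty\le 1$ matches the paper's estimates. No gaps.
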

\end{tcolorbox}
\begin{proof}
Using the definitions of ${\bf F}^{(k)}$ and $\hat F$, we have
\begin{align*}
{\bf Q}_{k+1}(I,u)-\hat Q^* ((x(0),\beta(I,b)),u)&= ({\bf F}^{(k)} {\bf Q}_k)(I,u) - (\hat F \hat Q^*)((x(0),\beta(I,b)),u)\\
&= \gamma \int_{x(1)\in X} {\inf_{u'\in U} {\bf Q}_k (I',u')\sum_{s\in S} {[{\bf b}(k)]_s} p_{\bf x} (x(1)|x(0),s,u)dx(1)}\\
&- \gamma \int_{x(1) \in X} {\inf_{u' \in U} \hat Q^*
((x(1),\beta (I',b')),u')\sum_{s \in S} {[\beta (I,b)]_s }
p_{\bf x} (x(1)|x(0),s,u)dx(1)},\\
&= \gamma \int_{x(1)\in X} {\inf_{u'\in U} {\bf Q}_k(I',u')\sum_{s\in S} {[\beta(I,{\bf b}(k-L))]_s}
p_{\bf x} (x(1)|x(0),s,u)dx(1)}\\
&- \gamma \int_{x(1) \in X} {\inf_{u' \in U} \hat Q^*((x(1),\beta (I',b')),u')\sum_{s \in S} {[\beta (I,b)]_s }
p_{\bf x} (x(1)|x(0),s,u)dx(1)},
\end{align*}
where $b' = P(x)^T b$. Adding and subtracting
\begin{align*}
&\gamma \int_{x(1) \in X} {\inf_{u' \in U} {\bf Q}_k (I',u')\sum_{s \in S} {[\beta (I,b)]_s } p_{\bf x} (x(1)|x(0),s,u)dx(1)}
\end{align*}
to the last equation, we have
\begin{align*}
&{\bf Q}_{k+1}(I,u)-\hat Q^*((x,\beta(I,b)),u)\\
&\le \gamma |S| \int_{x(1) \in X} \inf_{u' \in U}
{\bf Q}_k (I',u') \times \sum_{s \in S} {|[\beta (I,{\bf b}(k-L))]_s - [\beta(I,b)]_s|} \frac{1}{|S|}p_{\bf x} (x(1)|x(0),s,u)dx(1)\\
&+ \gamma \int_{x(1)\in X} {\sup_{u' \in U} (|{\bf Q}_k
(I',u') - \hat Q^* ((x(1),\beta (I',b')),u'))} \times \sum_{s \in S} {[\beta(I,b)]_s} p_{\bf x}(x(1)|x(0),s,u)dx(1)\\
&\le\gamma |S|\int_{x(1)\in X} {\frac{M |S|}{1-\gamma}\| \beta(I,{\bf b}(k-L)) - \beta(I,b)\|_\infty} \times \sum_{s \in S} {\frac{1}{|S|}p_{\bf x} (x(1)|x(0),s,u)} dx(1)\\
&+ \gamma \int_{x(1) \in X} { \sup_{u' \in U} |{\bf Q}_k(I',u')-\hat Q^*((x(1),\beta (I',b')),u')|} \times \sum_{s \in S} {[\beta(I,b)]_s} p_{\bf x}(x(1)|x(0),s,u)dx(1),
\end{align*}
where in the second inequality, we use the definition of $\|\cdot
\|_\infty$ and the bound ${\bf Q}_k (I',u') \le M/(1-\gamma),\forall I' \in {\cal I},u'\in U$.
Using~\eqref{eq:Lipschitz-const}, we have
\begin{align*}
&{\bf Q}_{k+1}(I,u)-\hat Q^* ((x(0),\beta(I,b)),u)\\
&\le \frac{\gamma M|S|^2}{1-\gamma}l^*_L \| {\bf b}(k-L)-b\|_\infty \int_{x(1)\in X} {\sum_{s\in S}
{\frac{1}{|S|}p_{\bf x} (x(1)|x(0),s,u)} dx(1)}\\
& + \gamma \sup_{u'\in U,I' \in X^{L+1}}
|{\bf Q}_k (I',u')-\hat Q^*((x(1),\beta(I',b')),u')|\\
&\le \gamma \frac{M|S|^2}{1 - \gamma}l^*_L +\gamma \sup_{u' \in U,I' \in X^{L + 1} } (|{\bf
Q}_k (I',u') - \hat Q^* ((x(1),\beta (I',b')),u')|),
\end{align*}
where in the last inequality, we use the fact that $\| {\bf b}(k-L)-b\|_\infty \leq 1$ for any ${\bf b}(k-L), b$ inside the unit simplex $\Delta_{|S|}$ and $\sum_{s \in S} {\frac{1}{|S|}p_{\bf x}
(x(1)|x(0),s,u)}$ is a probability density function. The desired result
follows by taking the superimum over $I \in {\cal I},u\in U$, and $b
\in \Delta_{|S|}$ on the left-hand side of the last inequality.
\end{proof}

{\em Proof of~\cref{thm:Q-error-bound}}: Combining the
inequalities in~\cref{lemma:basic-relation} for $k=0$ and $k=1$ yields
\begin{align*}
&\frac{1}{\gamma^2}\sup_{u \in U,I\in
X^{L+1},b \in \Delta_{|S|} } |{\bf Q}_2(I,u)-\hat Q^*((x(0),\beta(I,b)),u)|\\
&\le (1+\gamma^{-1} )\frac{M|S|^2}{1-\gamma}l_L^*+\sup_{u \in U,I\in X^{L+1},b \in \Delta_{|S|} } |{\bf Q}_0(I,u) - \hat Q^* ((x(0),\beta (I,b),u)|.
\end{align*}
Repeating this $k-2$ times, one gets
\begin{align*}
&\sup_{u\in U,I \in X^{L + 1} ,b \in \Delta_{|S|} } |{\bf Q}_k (I,u) - \hat Q^* ((x(0),\beta(I,b)),u)|\\
&\le \gamma^k \left( \sum_{i=0}^{k - 1}
{\frac{1}{\gamma^i}} \right)\frac{M|S|^2}{1-\gamma}l_L^*+ \gamma^k \sup_{u \in U,I\in X^{L+1},b
\in \Delta_{|S|}} |{\bf Q}_0(I,u)-\hat Q^*((x(0),\beta(I,b),u)|.
\end{align*}
Using ${\bf Q}_0(I,u) \equiv 0$, $\hat Q^*((x(0),\beta (I,b),u)
\le M/(1-\gamma)$, and $\sum_{i=0}^{k-1} {(1/\gamma^i)}=(1-(\gamma^{-1})^k)/(1-\gamma^{-1})$, the first
inequality in~\cref{thm:Q-error-bound} follows. The second
inequality can be obtained by taking $\limsup_{k\to\infty}$ on
both sides of the inequality. This completes the proof.

\end{document}